\theoremstyle{plain}
\newtheorem{theorem}{Theorem}[section]
\newtheorem{corollary}[theorem]{Corollary}
\newtheorem{lemma}[theorem]{Lemma}
\theoremstyle{definition}
\theoremstyle{remark}
\DeclareMathOperator*{\setspan}{span}
\DeclareMathOperator*{\cone}{cone}
\DeclareMathOperator*{\End}{End}
\newcommand{\relmiddle}[1]{\mathrel{}\middle#1\mathrel{}}
\newcommand{\bbF}{\mathbb{F}}
\newcommand{\bbK}{\mathbb{K}}
\newcommand{\bbV}{\mathbb{V}}
\newcommand{\CP}{\mathcal{CP}}
\newcommand{\COP}{\mathcal{COP}}
\newcommand{\RNum}[1]{\uppercase\expandafter{\romannumeral #1\relax}} 
\newcommand{\Rnum}[1]{\lowercase\expandafter{\romannumeral #1\relax}} 
\title{Non-facial exposedness of \\ copositive cones over symmetric cones}
\let\@fnsymbol\@arabic
\author{
\normalsize
    Mitsuhiro Nishijima\thanks{Center for Advanced Intelligence Project, RIKEN, 1-4-1, Nihonbashi, Chuo-ku, 1030027, Tokyo, Japan. ({\tt mitsuhiro.nishijima@riken.jp}).}
\and
\normalsize
        Bruno F. Louren\c{c}o\thanks{Department of Fundamental Statistical Mathematics, The Institute of Statistical Mathematics, 10-3 Midori-cho, Tachikawa-shi, 1908562, Tokyo, Japan. ({\tt bruno@ism.ac.jp}).}
        }
\begin{document}
\maketitle

\begin{abstract}\noindent
In this paper, we consider copositive cones over symmetric cones and
show that they are never facially exposed when the underlying cone has dimension at least $2$.
We do so by explicitly exhibiting a non-exposed extreme ray.
Our result extends the known fact that the cone of copositive matrices over the nonnegative orthant is not facially exposed in general.
\end{abstract}
\vspace{0.5cm}

\noindent
{\bf Key words. }Facial exposedness, Copositive cones, Symmetric cones
%

\section{Introduction}
Let $\mathbb{K}$ be a closed cone contained in a finite-dimensional real inner product space.
A self-adjoint linear transformation is said to be \emph{copositive over $\mathbb{K}$} if the associated quadratic form is nonnegative over $\mathbb{K}$.
We refer to the cone of self-adjoint linear transformations that are copositive over $\mathbb{K}$ as the \emph{copositive cone over $\mathbb{K}$}.

If $\mathbb{K}$ is the usual nonnegative orthant $\mathbb{R}^n_+$, then the corresponding copositive cone reduces to the cone of copositive matrices in the usual sense~\cite{SB2021}.
We call the copositive cone over the nonnegative orthant the \emph{standard copositive cone}.
Standard copositive cones have been used to reformulate various NP-hard problems as conic linear programs~\cite{BDd+2000,Burer2009}.

Copositive cones over sets beyond nonnegative orthants enable us to convert more NP-hard problems into equivalent conic linear programs~\cite{Burer2012,BD2012,BMP2016}.
Previous studies have investigated the geometry~\cite{GS2013,GST2013}, approximations~\cite{ZVP2006,BD2012,Lasserre2014,NN2024_Approximation,NN2024_Generalizations}, and the membership problem~\cite{Orlitzky2021} for general copositive cones and their duals.

In this paper, we investigate the facial structure, and in particular, the \emph{facial exposedness} of copositive cones over symmetric cones.
Faces and facially exposed convex sets are basic notations in convex analysis~\cite[Section~\mbox{18}]{Rockafellar1970}.
In particular, in the context of optimization, facial exposedness is important for several reasons.
First, it arises when ensuring the invariance of strict complementarity for conic linear programming under duality~\cite[Theorem~1]{CT2008}.
Second, facial exposedness is a necessary condition for a number of useful stronger exposure properties such as \emph{niceness}~\cite[Theorem~3]{Pataki2013_On} (or \emph{facial dual completeness}~\cite{RT2019}), \emph{tangential exposedness}~\cite[Proposition~\mbox{2.2}]{RT2019}, \emph{amenability}~\cite[Proposition~\mbox{13}]{Lourenco2021}, and \emph{projectional exposedness}~\cite{BW1981}, \cite[Corollary~\mbox{4.4}]{ST1990}.
See also \cite{LRS2022} for a discussion on some of those properties.
It is also a necessary condition for certain algebraic properties such as \emph{spectrahedrality}~\cite[Corollary~1]{RG1995} and \emph{hyperbolicity}~\cite[Theorem~\mbox{23}]{Renegar2005}.
See also \cite[Corollary~\mbox{3.5}]{LRS2022} and \cite[Theorem~\mbox{1.1}]{LRS2024} for the connection between these algebraic properties and amenability.
In this way, proving that a cone is \emph{not facially exposed} provides an easy way to certify that it is neither spectrahedral nor hyperbolic.

The facial structure of the standard copositive cone, although not completely understood, has been a subject of several papers.
Findings up to around the year 2021 are covered in the book~\cite{SB2021} and references therein.
See \cite{HA2024,KT2021,KT2022,MST+2024,Nishijima2024,Kostyukova20XX} for more recent results.
Of particular importance is that the standard copositive cone of order $n\ge 2$ has non-exposed extreme rays generated by the matrix $\bm{e}_i\bm{e}_i^\top$ for each $i = 1,\dots,n$~\cite[Theorem~4.4]{Dickinson2011}, where $\bm{e}_i$ is the vector with the $i$th element $1$ and the others $0$.
However, to the best of our knowledge, there is no systematic study on the facial structure of copositive cones over symmetric cones other than the nonnegative orthant.

Let $\mathbb{K}$ be a symmetric cone of dimension at least $2$.
We will show that for every $c$ that generates an extreme ray of $\mathbb{K}$, the corresponding rank-1 tensor $c\otimes c$ generates a non-exposed extreme ray of the copositive cone over $\mathbb{K}$.
In particular, the copositive cone over $\mathbb{K}$ is never facially exposed.
Our result generalizes \cite[Theorem~\mbox{4.4}]{Dickinson2011} to the case of general symmetric cones.
We remark, however, that the proof will not be a straightforward extension of that of \cite[Theorem~\mbox{4.4}]{Dickinson2011}.

The organization of this paper is as follows.
In Section~\ref{sec:Preliminaries}, we introduce and recall some concepts used in this paper, including copositive cones and symmetric cones.
In Section~\ref{sec:main}, we provide a non-exposed extreme ray of copositive cones over symmetric cones.

\section{Preliminaries}\label{sec:Preliminaries}
\subsection{Notation}
Let $\mathbb{R}$ be the set of real numbers.
For an element $x$ in a finite-dimensional real vector space, we define
\begin{align*}
\mathbb{R}x &\coloneqq \{\alpha x \mid \alpha \in \mathbb{R}\},\\
\mathbb{R}_+x &\coloneqq \{\alpha x \mid \alpha \ge 0\}.
\end{align*}
For a subset $S$ of a finite-dimensional real inner product space $V$, we use $\setspan S$ and $S^\perp$ to denote the linear span of $S$ and the space of $x\in V$ such that the inner product between $x$ and $y$ is $0$ for all $y\in S$, respectively.
For a linear mapping $f$ from a finite-dimensional real inner product space to another one, we denote by $f^*$  its adjoint.
For two functions $f$ and $g$, we write $f\circ g$ for the composition of $f$ and $g$.
Note that the product of Jordan algebras introduced in Section~\ref{subsec:symcone_EJA} is also denoted by the symbol $\circ$.

\subsection{Basic properties of linear mappings}
Throughout this subsection, let $(\mathbb{V},\bullet)$ be a finite-dimensional real inner product space with an orthogonal direct sum decomposition
\begin{equation}
\mathbb{V} = \bigoplus_{l=1}^k\mathbb{V}_l. \label{eq:V_direct_sum}
\end{equation}
Let $\End(\mathbb{V})$ be the space of linear transformations on $\mathbb{V}$.
The space $\End(\mathbb{V})$ is equipped with the trace inner product denoted by $\langle \cdot,\cdot\rangle$.
We define $\mathcal{S}(\mathbb{V})$ to be the subspace of $\End(\mathbb{V})$ whose elements are self-adjoint.

For each $i = 1,\dots,k$, let $\mathcal{P}_{\mathbb{V}_i}\colon \mathbb{V} \to \mathbb{V}_i$ be the orthogonal projection onto $\mathbb{V}_i$, i.e.,:
\begin{equation*}
\begin{array}{rccc}
\mathcal{P}_{\mathbb{V}_i}\colon &\mathbb{V} = \bigoplus_{l=1}^k\mathbb{V}_l                &\longrightarrow& \mathbb{V}_i.                   \\
        & \rotatebox{90}{$\in$}&               & \rotatebox{90}{$\in$} \\
        &\sum_{l=1}^kx_l                    & \longmapsto   & x_i
\end{array}
\end{equation*}
Note that
\begin{equation}
\sum_{l=1}^k\mathcal{P}_{\mathbb{V}_l}(x) = x \label{eq:proj_sum}
\end{equation}
holds for all $x \in \mathbb{V}$.
In addition, the adjoint $\mathcal{P}_{\mathbb{V}_l}^*\colon \mathbb{V}_l \to \mathbb{V}$ is the inclusion mapping.
For $\mathcal{A} \in \mathcal{S}(\mathbb{V})$ and for each $i,j = 1,\dots,k$, we define $\mathcal{A}_{i,j} \coloneqq \mathcal{P}_{\mathbb{V}_i} \circ \mathcal{A}|_{\mathbb{V}_j}$, which is a linear mapping from $\mathbb{V}_j$ to $\mathbb{V}_i$.

The following two lemmas justify the notation $\mathcal{A}_{i,j}$ for a self-adjoint linear transformation $\mathcal{A}$.
\begin{lemma}\label{lem:sym_mapping_element}
Let $\mathcal{A} \in \mathcal{S}(\mathbb{V})$.
For each $i,j=1,\dots,k$, $\mathcal{A}_{i,j} = (\mathcal{A}_{j,i})^*$ holds.
\end{lemma}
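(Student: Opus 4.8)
The plan is to reduce the identity to the elementary rule $(\mathcal{F}\circ\mathcal{G})^* = \mathcal{G}^*\circ\mathcal{F}^*$ for adjoints of compositions, combined with the self-adjointness of $\mathcal{A}$. The first step is to rewrite the definition $\mathcal{A}_{i,j} = \mathcal{P}_{\mathbb{V}_i}\circ\mathcal{A}|_{\mathbb{V}_j}$ in a form amenable to taking adjoints. Since the excerpt records that $\mathcal{P}_{\mathbb{V}_j}^*\colon\mathbb{V}_j\to\mathbb{V}$ is precisely the inclusion mapping, the restriction $\mathcal{A}|_{\mathbb{V}_j}$ coincides with $\mathcal{A}\circ\mathcal{P}_{\mathbb{V}_j}^*$, so that $\mathcal{A}_{i,j} = \mathcal{P}_{\mathbb{V}_i}\circ\mathcal{A}\circ\mathcal{P}_{\mathbb{V}_j}^*$ as a mapping $\mathbb{V}_j\to\mathbb{V}_i$.

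Second, I would take adjoints on both sides. Using $(\mathcal{F}\circ\mathcal{G}\circ\mathcal{H})^* = \mathcal{H}^*\circ\mathcal{G}^*\circ\mathcal{F}^*$, the self-adjointness $\mathcal{A}^* = \mathcal{A}$, and the involution property $(\mathcal{P}_{\mathbb{V}_j}^*)^* = \mathcal{P}_{\mathbb{V}_j}$, one obtains $(\mathcal{A}_{i,j})^* = \mathcal{P}_{\mathbb{V}_j}\circ\mathcal{A}\circ\mathcal{P}_{\mathbb{V}_i}^*$, and the right-hand side is by definition $\mathcal{A}_{j,i}$. Hence $(\mathcal{A}_{i,j})^* = \mathcal{A}_{j,i}$, and applying the adjoint once more (adjunction being an involution) yields the claimed $\mathcal{A}_{i,j} = (\mathcal{A}_{j,i})^*$.

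Alternatively, and perhaps more transparently, one can verify the identity directly from the definition of the adjoint: for arbitrary $x\in\mathbb{V}_j$ and $y\in\mathbb{V}_i$ it suffices to check $\mathcal{A}_{i,j}(x)\bullet y = x\bullet\mathcal{A}_{j,i}(y)$. Expanding the left-hand side, the key observation is that because $y\in\mathbb{V}_i$ and $\mathcal{P}_{\mathbb{V}_i}$ is the orthogonal projection onto $\mathbb{V}_i$, we have $\mathcal{P}_{\mathbb{V}_i}(\mathcal{A}(x))\bullet y = \mathcal{A}(x)\bullet y$; self-adjointness turns this into $x\bullet\mathcal{A}(y)$; and the symmetric use of the projection $\mathcal{P}_{\mathbb{V}_j}$ on the right (since $x\in\mathbb{V}_j$) converts it into $x\bullet\mathcal{A}_{j,i}(y)$.

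I do not expect any genuine obstacle here: the statement is essentially bookkeeping for adjoints of blocks. The only point that warrants care is keeping track of the several inner products involved, namely the ambient $\bullet$ on $\mathbb{V}$ and its restrictions to $\mathbb{V}_i$ and $\mathbb{V}_j$, and using that the orthogonality of the decomposition~\eqref{eq:V_direct_sum} is precisely what makes $\mathcal{P}_{\mathbb{V}_i}^*$ the inclusion and guarantees $\mathcal{P}_{\mathbb{V}_i}(z)\bullet y = z\bullet y$ for every $z\in\mathbb{V}$ and $y\in\mathbb{V}_i$.
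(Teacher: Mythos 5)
Your proposal is correct, and your ``alternative'' direct verification is essentially the paper's own proof: the paper fixes $x_i\in\mathbb{V}_i$, $x_j\in\mathbb{V}_j$ and shows that both $x_i\bullet\mathcal{A}_{i,j}(x_j)$ and $x_i\bullet(\mathcal{A}_{j,i})^*(x_j)$ equal $x_i\bullet\mathcal{A}(x_j)$, using exactly the two ingredients you identify (orthogonality of the decomposition to drop the projection, and self-adjointness of $\mathcal{A}$). Your primary route --- writing $\mathcal{A}_{i,j}=\mathcal{P}_{\mathbb{V}_i}\circ\mathcal{A}\circ\mathcal{P}_{\mathbb{V}_j}^*$ and applying $(\mathcal{F}\circ\mathcal{G}\circ\mathcal{H})^*=\mathcal{H}^*\circ\mathcal{G}^*\circ\mathcal{F}^*$ together with $(\mathcal{P}_{\mathbb{V}_j}^*)^*=\mathcal{P}_{\mathbb{V}_j}$ --- is a legitimate and slightly slicker repackaging of the same computation; it leans on the fact, stated in the paper, that $\mathcal{P}_{\mathbb{V}_j}^*$ is the inclusion, and it avoids unwinding inner products at the cost of invoking the adjoint-of-composition rule. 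Neither route has a gap; the difference is purely one of presentation.
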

\begin{proof}
Let $x_i \in \mathbb{V}_i$ and $x_j \in \mathbb{V}_j$ be arbitrary.
To prove this lemma, it is sufficient to show that both $x_i \bullet \mathcal{A}_{i,j}(x_j)$ and $x_i\bullet (\mathcal{A}_{j,i})^*(x_j)$ agree with $x_i \bullet \mathcal{A}(x_j)$.

First, we have
\begin{equation}
x_i \bullet \mathcal{A}_{i,j}(x_j) = x_i \bullet \mathcal{P}_{\mathbb{V}_i}(\mathcal{A}(x_j)) = x_i \bullet \mathcal{A}(x_j), \label{eq:sym_mapping_element_1}
\end{equation}
where we use \eqref{eq:proj_sum} and its orthogonality to derive the second equality.

Second, we have
\begin{equation*}
x_i\bullet (\mathcal{A}_{j,i})^*(x_j) = x_j \bullet \mathcal{A}_{j,i}(x_i) = x_j \bullet \mathcal{A}(x_i) = x_i\bullet \mathcal{A}(x_j),
\end{equation*}
where we use the symmetry of the inner product and the definition of the adjoint $(\mathcal{A}_{j,i})^*$ to derive the first equality, the second equality follows for the same reason as in \eqref{eq:sym_mapping_element_1}, and the third equality holds because of the self-adjointness of $\mathcal{A}$ and the symmetry of the inner product.
This completes the proof.
\end{proof}

\begin{lemma}\label{lem:quad_form}
Let $\mathcal{A} \in \mathcal{S}(\mathbb{V})$.
For each $x \in \mathbb{V}$, we decompose $x$ into $\sum_{l=1}^kx_l$ corresponding to the decomposition~\eqref{eq:V_direct_sum}.
Then it follows that
\begin{equation*}
x\bullet \mathcal{A}(x) = \sum_{i=1}^kx_i\bullet \mathcal{A}_{i,i}(x_i) + 2\sum_{1\le i < j\le k}x_i\bullet \mathcal{A}_{i,j}(x_j).
\end{equation*}
\end{lemma}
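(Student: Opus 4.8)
The plan is to expand the quadratic form $x \bullet \mathcal{A}(x)$ directly by substituting the decomposition $x = \sum_{l=1}^k x_l$ into both arguments of the bilinear pairing. First I would use linearity of $\mathcal{A}$ to write $\mathcal{A}(x) = \sum_{j=1}^k \mathcal{A}(x_j)$, and then pair this against $x = \sum_{i=1}^k x_i$ to obtain the double sum
\begin{equation*}
x \bullet \mathcal{A}(x) = \sum_{i=1}^k\sum_{j=1}^k x_i \bullet \mathcal{A}(x_j).
\end{equation*}
The key observation is that each summand involving $\mathcal{A}$ can be replaced by the corresponding block $\mathcal{A}_{i,j}$: since $\mathcal{A}_{i,j} = \mathcal{P}_{\mathbb{V}_i}\circ\mathcal{A}|_{\mathbb{V}_j}$ by definition, and since $x_i \in \mathbb{V}_i$ is orthogonal to every other summand of $\mathcal{A}(x_j)$ in the decomposition~\eqref{eq:proj_sum}, we have $x_i \bullet \mathcal{A}(x_j) = x_i \bullet \mathcal{A}_{i,j}(x_j)$. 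This is precisely the computation recorded in~\eqref{eq:sym_mapping_element_1}, so I would simply cite it. Substituting yields $x \bullet \mathcal{A}(x) = \sum_{i,j} x_i \bullet \mathcal{A}_{i,j}(x_j)$.

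Next I would separate the diagonal terms $i = j$ from the off-diagonal terms, giving $\sum_{i=1}^k x_i \bullet \mathcal{A}_{i,i}(x_i) + \sum_{i \neq j} x_i \bullet \mathcal{A}_{i,j}(x_j)$. The remaining task is to show that the off-diagonal sum equals $2\sum_{1 \le i < j \le k} x_i \bullet \mathcal{A}_{i,j}(x_j)$. For this I would pair up the $(i,j)$ and $(j,i)$ contributions and invoke Lemma~\ref{lem:sym_mapping_element}, which gives $\mathcal{A}_{i,j} = (\mathcal{A}_{j,i})^*$. Using this together with the definition of the adjoint and the symmetry of the inner product, each term with indices $(j,i)$ satisfies
\begin{equation*}
x_j \bullet \mathcal{A}_{j,i}(x_i) = x_j \bullet (\mathcal{A}_{i,j})^*(x_i) = \mathcal{A}_{i,j}(x_j) \bullet x_i = x_i \bullet \mathcal{A}_{i,j}(x_j),
\end{equation*}
so the two contributions coincide and the off-diagonal sum collapses to twice the sum over $i < j$, completing the proof.

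I do not expect any genuine obstacle here, as the statement is a routine expansion of a symmetric bilinear form in block form. The only point requiring slight care is the off-diagonal symmetrization that produces the factor of $2$; this step is where Lemma~\ref{lem:sym_mapping_element} is used essentially, and one must keep track of the index swap correctly rather than assume $\mathcal{A}_{i,j} = \mathcal{A}_{j,i}$ (which holds only up to taking adjoints). Everything else is bookkeeping justified by~\eqref{eq:proj_sum} and the definition of the blocks $\mathcal{A}_{i,j}$.
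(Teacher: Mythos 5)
Your proposal is correct and follows essentially the same route as the paper's proof: expand the double sum, insert the projections via orthogonality to replace $x_i\bullet\mathcal{A}(x_j)$ by $x_i\bullet\mathcal{A}_{i,j}(x_j)$, split off the diagonal, and use Lemma~\ref{lem:sym_mapping_element} to identify the $(j,i)$ terms with the $(i,j)$ terms. No issues.
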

\begin{proof}
Since $\mathcal{A}(x) = \sum_{j=1}^k\mathcal{A}|_{\mathbb{V}_j}(x_j)$, we have
\begin{align}
x\bullet \mathcal{A}(x) &= \sum_{i,j=1}^kx_i\bullet \mathcal{A}|_{\mathbb{V}_j}(x_j) \nonumber\\
&= \sum_{i,j=1}^kx_i\bullet (\mathcal{P}_{\mathbb{V}_i}\circ \mathcal{A}|_{\mathbb{V}_j})(x_j)\nonumber\\
&= \sum_{i,j=1}^kx_i\bullet  \mathcal{A}_{i,j}(x_j) \nonumber\\
&= \sum_{i=1}^kx_i\bullet \mathcal{A}_{i,i}(x_i) + \sum_{1\le i < j\le k}x_i\bullet \mathcal{A}_{i,j}(x_j) + \sum_{1 \le j < i\le k}x_i\bullet \mathcal{A}_{i,j}(x_j), \label{eq:quad_form_expansion}
\end{align}
where we use \eqref{eq:proj_sum} and its orthogonality to derive the second equality.
The third term in \eqref{eq:quad_form_expansion} is equal to
\begin{equation}
\sum_{1 \le j < i\le k}x_j\bullet (\mathcal{A}_{i,j})^*(x_i) = \sum_{1 \le j < i\le k}x_j\bullet \mathcal{A}_{j,i}(x_i) = \sum_{1 \le i < j\le k}x_i\bullet \mathcal{A}_{i,j}(x_j), \label{eq:quad_form_expansion_third_term}
\end{equation}
where the first equality follows from Lemma~\ref{lem:sym_mapping_element}.
Since \eqref{eq:quad_form_expansion_third_term} agrees with the second term in \eqref{eq:quad_form_expansion}, we obtain the desired result.
\end{proof}

Lemma~\ref{lem:quad_form} implies that for each $\mathcal{A} \in \mathcal{S}(\mathbb{V})$ and for each $i,j=1,\dots,k$, $\mathcal{A}_{i,j}$ can be regarded as the ``$(i,j)$th element'' of $\mathcal{A}$ when an orthogonal direct sum decomposition~\eqref{eq:V_direct_sum} is fixed.
Therefore, we may write a self-adjoint linear transformation $\mathcal{A}\in\mathcal{S}(\mathbb{V})$ in the following matrix-like form:
\begin{equation}
\begin{pmatrix}
\mathcal{A}_{1,1} & \mathcal{A}_{1,2} & \cdots & \mathcal{A}_{1,k}\\
& \mathcal{A}_{2,2} & \cdots & \mathcal{A}_{2,k}\\
& & \ddots & \vdots\\
& & & \mathcal{A}_{k,k}
\end{pmatrix}, \label{eq:matrix_form}
\end{equation}
where the strictly lower triangular portion of $\mathcal{A}$ can be omitted because of its self-adjointness.
For a subset $\mathcal{S}$ in $\mathcal{S}(\mathbb{V}_k)$, we define
\begin{equation}
\{0\} \oplus \mathcal{S}  \coloneqq \{\mathcal{P}_{\mathbb{V}_k}^* \circ \mathcal{A} \circ \mathcal{P}_{\mathbb{V}_k} \mid \mathcal{A} \in \mathcal{S}\} \subseteq \mathcal{S}(\mathbb{V}). \label{eq:zero_padding}
\end{equation}
Using the matrix-like notation, we can write the set~\eqref{eq:zero_padding} as
\begin{equation*}
\left\{\begin{pmatrix}
0 & \cdots & 0 & 0\\
&\ddots & \vdots & \vdots\\
& & 0 & 0\\
& & & \mathcal{A}
\end{pmatrix} \relmiddle| \mathcal{A}\in\mathcal{S}\right\}.
\end{equation*}

\subsection{Cones and their faces}
Let $(\bbV,\bullet)$ be a finite-dimensional real inner product space.
A set $\bbK \subseteq \bbV$ is called a \emph{cone} if $\alpha x\in \bbK$ for all $\alpha > 0$ and $x\in \bbK$.
For a cone $\bbK$, its \emph{dual cone} is denoted by $\bbK^*$ and is the set of $x\in \bbV$ such that $x\bullet y \ge 0$ for all $y\in \bbK$.

Let $\mathbb{K} \subseteq \bbV$ be a closed cone.
Then
\begin{equation}
\COP(\mathbb{K}) \coloneqq \{\mathcal{A} \in \mathcal{S}(\mathbb{V}) \mid  x\bullet \mathcal{A}(x) \ge 0 \text{ for all $x\in \mathbb{K}$}\} \label{eq:COP_transformation}
\end{equation}
denotes the \emph{copositive cone over $\mathbb{K}$}.
Clearly, the copositive cone $\COP(\mathbb{K})$ is a closed convex cone.
It is known that under the trace inner product induced by $\bullet$, the dual cone of $\eqref{eq:COP_transformation}$ is
\begin{equation}\label{eq:CP}
\CP(\mathbb{K}) = \cone({\left\{a\otimes a \mid a \in \mathbb{K} \right\}}),
\end{equation}
where $\cone({U})$ is the convex cone generated by a subset $U$, see, e.g., \cite[Proposition~1 and Lemma~1]{SZ2003}.
In particular, $\CP(\mathbb{K}) $ is closed. We also recall that for $a,b \in \mathbb{V}$, the tensor product $a\otimes b $  corresponds to
the linear mapping on $\mathbb{V}$ such that $(a\otimes b) (x) = (b \bullet x)a$, for all $x \in \mathbb{V}$.

Next, suppose that $\bbK$ is a closed convex cone.
A nonempty convex subcone $\bbF$ of $\bbK$ is called a \emph{face} of $\bbK$ if for any $a,b\in \bbK$, they belong to $\bbF$ whenever $a + b \in \bbF$.
For a nonzero $x\in \bbK$, if $\mathbb{R}_+x$ is a face of $\bbK$, we say that $x$ \emph{generates} the \emph{extreme ray} $\mathbb{R}_+x$.
A face $\bbF$ of $\bbK$ is said to be \emph{exposed} if there exists $h\in \bbK^*$ such that $\bbF = \bbK \cap \{h\}^\perp$.
If all the faces of $\bbK$ are exposed, then we call $\bbK$ \emph{facially exposed}.

\subsection{Symmetric cone and Euclidean Jordan algebra}\label{subsec:symcone_EJA}
In this subsection, we introduce symmetric cones.
We also present Euclidean Jordan algebras, which are closely related to symmetric cones as mentioned later.
The book written by Faraut and Kor\'{a}nyi~\cite{FK1994} is a standard textbook on this field, but the papers written by Faybusovich~\cite{Faybusovich2008} and Sturm~\cite{Sturm2000_Similarity} are also good references that are more focused on optimization aspects.

A closed cone $\mathbb{K}$ in a finite-dimensional real inner product space $\mathbb{E}$ is called  \emph{symmetric} if it satisfies the following two conditions:\footnote{
In the book written by Faraut and Kor\'{a}nyi~\cite{FK1994}, symmetric cones are \emph{open}.
In the context of optimization, however, symmetric cones are often considered \emph{closed}~\cite{Orlitzky2021,Sturm2000_Similarity} and we also follow the latter.}
\begin{enumerate}[(i)]
\item (Self-duality) $\mathbb{K}^* = \mathbb{K}$,
\item (Homogeneity) For all $x$ and $y$ belonging to the interior of $\mathbb{K}$, there exists a bijective linear transformation $\mathcal{G}\in \End(\mathbb{E})$ such that $\mathcal{G}(\mathbb{K}) = \mathbb{K}$ and $\mathcal{G}(x) = y$.
\end{enumerate}
By the self-duality of $\mathbb{K}$, $\mathbb{K}$ is full-dimensional in $\mathbb{E}$, i.e., $\setspan\mathbb{K} = \mathbb{E}$.

A \emph{Jordan algebra} is a finite-dimensional real vector space $\mathbb{E}$ equipped with a bilinear product $\circ\colon\mathbb{E}\times\mathbb{E}\to \mathbb{E}$
satisfying the following two conditions for all $x,y\in \mathbb{E}$:
\begin{enumerate}[(J1)]
\item (Commutativity) $x \circ y = y \circ x$,
\item (Jordan identity) $x\circ ((x\circ x)\circ y) = (x\circ x)\circ(x\circ y)$.
\end{enumerate}
We assume in this paper that every Jordan algebra $(\mathbb{E},\circ)$ has an identity element, denoted by $e$, concerning the product $\circ$.
A Jordan algebra $(\mathbb{E},\circ)$ is \emph{Euclidean} if there exists an inner product $\bullet\colon\mathbb{E}\times \mathbb{E}\to \mathbb{R}$ satisfying
\begin{enumerate}[(J3)]
\setcounter{enumi}{2}
\item (Associativity) $(x\circ y)\bullet z = x\bullet (y\circ z)$
\end{enumerate}
for all $x,y,z\in \mathbb{E}$.
In this paper, we fix such an associative inner product $\bullet$ and write a Euclidean Jordan algebra as a triple $(\mathbb{E},\circ,\bullet)$.
However, we may merely write $\mathbb{E}$ for $(\mathbb{E},\circ,\bullet)$ if it is clear from the context that $\mathbb{E}$ has the Euclidean Jordan algebraic structure.

Throughout this subsection, let $\mathbb{E}$ be a Euclidean Jordan algebra.
For convenience, let $x^2 \coloneqq x\circ x$ for each $x\in \mathbb{E}$.
It is known that the cone of squares in $\mathbb{E}$ defined as $\mathbb{E}_+ \coloneqq \{x^2\mid x\in\mathbb{E}\}$ is a symmetric cone~\cite[Theorem~\mbox{\RNum{3}.2.1}]{FK1994}.
Conversely, for a given symmetric cone $\mathbb{K}$ in a finite-dimensional real inner product space $(\hat{\mathbb{E}},\hat{\bullet})$, we can define a bilinear product $\hat{\circ}$ on $\hat{\mathbb{E}}$, so that $(\hat{\mathbb{E}},\hat{\circ},\hat{\bullet})$ is a Euclidean Jordan algebra and $\mathbb{K}$ agrees with the cone $\hat{\mathbb{E}}_+$~\cite[Theorem~\mbox{\RNum{3}.3.1}]{FK1994}.

An element $c\in\mathbb{E}$ is termed an \emph{idempotent} if $c^2 = c$.
Note that any idempotent $c\in \mathbb{E}$ belongs to the symmetric cone $\mathbb{E}_+$.
An idempotent $c$ is said to be \emph{primitive} if it is nonzero and cannot be represented as the sum of two nonzero idempotents.
Two idempotents $c$ and $d$ are termed \emph{orthogonal} if $c\circ d = 0$.
The system $c_1,\dots,c_r$ is called a \emph{Jordan frame} if each $c_i$ is a primitive idempotent, they are orthogonal to each other with respect to the product $\circ$, and $\sum_{i=1}^rc_i = e$.
The value $r$ appearing in a Jordan frame is called the \emph{rank} of the Euclidean Jordan algebra and depends only on the algebra~\cite[Section~\mbox{\RNum{3}.1}]{FK1994}.

For an idempotent $c\in \mathbb{E}$ and $\lambda = 0,\frac{1}{2},1$, we define
\begin{equation*}
\mathbb{E}(c,\lambda) \coloneqq \{x \in \mathbb{E} \mid c\circ x = \lambda x\}.
\end{equation*}
The subspaces $\mathbb{E}(c,0)$ and $\mathbb{E}(c,1)$ are Euclidean Jordan subalgebras of $\mathbb{E}$ satisfying $\mathbb{E}(c,0) \circ \mathbb{E}(c,1) = \{0\}$, see ~\cite[Proposition~\mbox{\RNum{4}.1.1}]{FK1994}.
Using the subspaces $\mathbb{E}(c,\lambda)$, the space $\mathbb{E}$ decomposes into the following orthogonal direct sum~\cite[page~\mbox{62}]{FK1994}:
\begin{equation}
\mathbb{E} = \mathbb{E}(c,0)\oplus \mathbb{E}(c,{\textstyle \frac{1}{2}}) \oplus \mathbb{E}(c,1). \label{eq:Peirce_concise}
\end{equation}
We refer to \eqref{eq:Peirce_concise} as the \emph{Peirce decomposition of $\mathbb{E}$ with respect to the idempotent $c$}.
Furthermore, we can decompose $\mathbb{E}$ more finely.
Suppose that the rank of $\mathbb{E}$ is $r$.
We fix a Jordan frame $c_1,\dots,c_r$ of $\mathbb{E}$ and consider the following subspaces of $\mathbb{E}$:
\begin{equation}\label{eq:Eij}
\begin{aligned}
\mathbb{E}_{ii} &\coloneqq \mathbb{E}(c_i,1) = \mathbb{R}c_i& &\text{($i = 1,\dots,r$)},\\
\mathbb{E}_{ij} &\coloneqq \mathbb{E}(c_i,{\textstyle \frac{1}{2}}) \cap \mathbb{E}(c_j,{\textstyle \frac{1}{2}}) & & \text{($i,j = 1,\dots,r$, $i\neq j$)}.
\end{aligned}
\end{equation}
Then we can decompose $\mathbb{E}$ into the following orthogonal direct sum of the above subspaces~\cite[Theorem~\mbox{\RNum{4}.2.1.i}]{FK1994}:
\begin{equation}
\mathbb{E} = \bigoplus_{1\le i\le j\le r}\mathbb{E}_{ij}. \label{eq:Peirce}
\end{equation}
We call \eqref{eq:Peirce} the \emph{Peirce decomposition of $\mathbb{E}$ with respect to the Jordan frame $c_1,\dots,c_r$}.
For each $i,j = 1,\dots,r$ with $i\neq j$ and every $x\in \mathbb{E}_{ij}$, we have
\begin{equation}\label{eq:xij2}
x^2 = \underbrace{c_i\circ x^2}_{ \in \mathbb{E}_{ii}} + \underbrace{c_j\circ x^2}_{ \in \mathbb{E}_{jj}},
\end{equation}
see \cite[Proposition~\mbox{\RNum{4}.1.1}]{FK1994} and its proof.

\section{Main result}\label{sec:main}

\begin{lemma}\label{lem:COP_symcone_face}
{Let $(\mathbb{E},\circ,\bullet)$ be a Euclidean Jordan algebra.}
For an idempotent $c\in \mathbb{E}$, consider the Peirce decomposition~\eqref{eq:Peirce_concise} with respect to $c$.
Then $\{0\}\oplus \COP(\mathbb{E}(c,1)_+)$ is a face of $\COP(\mathbb{E}_+)$.
\end{lemma}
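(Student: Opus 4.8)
The plan is to verify directly the two defining properties of a face from Section~\ref{sec:Preliminaries}. Write $\mathcal{F}:=\{0\}\oplus\COP(\mathbb{E}(c,1)_+)$ and, using the matrix-like form \eqref{eq:matrix_form} induced by the Peirce decomposition \eqref{eq:Peirce_concise}, label the blocks of any $\mathcal{A}\in\mathcal{S}(\mathbb{E})$ by the eigenvalues $0,\tfrac12,1$, namely $\mathcal{A}_{00},\mathcal{A}_{0,\frac12},\mathcal{A}_{0,1},\mathcal{A}_{\frac12,\frac12},\mathcal{A}_{\frac12,1},\mathcal{A}_{11}$; by \eqref{eq:zero_padding}, $\mathcal{F}$ consists of those $\mathcal{A}$ whose only possibly nonzero block is $\mathcal{A}_{11}$, with $\mathcal{A}_{11}\in\COP(\mathbb{E}(c,1)_+)$. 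First I would settle the two ``easy'' facts. For $\mathcal{F}\subseteq\COP(\mathbb{E}_+)$, take $\mathcal{A}=\mathcal{P}_{\mathbb{E}(c,1)}^*\circ\mathcal{B}\circ\mathcal{P}_{\mathbb{E}(c,1)}$ with $\mathcal{B}\in\COP(\mathbb{E}(c,1)_+)$; then $x\bullet\mathcal{A}(x)=\mathcal{P}_{\mathbb{E}(c,1)}(x)\bullet\mathcal{B}(\mathcal{P}_{\mathbb{E}(c,1)}(x))$, so it suffices that $\mathcal{P}_{\mathbb{E}(c,1)}(\mathbb{E}_+)\subseteq\mathbb{E}(c,1)_+$, which I would obtain from symmetric-cone theory: the orthogonal projection onto $\mathbb{E}(c,1)$ equals the quadratic representation $P_c$, which preserves $\mathbb{E}_+$, and $\mathbb{E}_+\cap\mathbb{E}(c,1)=\mathbb{E}(c,1)_+$ since $\mathbb{E}(c,1)$ is a subalgebra \cite{FK1994}. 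Conversely, restricting the copositivity of any $\mathcal{A}\in\COP(\mathbb{E}_+)$ to $x\in\mathbb{E}(c,1)_+\subseteq\mathbb{E}_+$ and applying Lemma~\ref{lem:quad_form} gives $x\bullet\mathcal{A}_{11}(x)\ge0$, so $\mathcal{A}_{11}\in\COP(\mathbb{E}(c,1)_+)$ automatically.

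Now assume $\mathcal{A},\mathcal{B}\in\COP(\mathbb{E}_+)$ with $\mathcal{C}:=\mathcal{A}+\mathcal{B}\in\mathcal{F}$; I must show $\mathcal{A},\mathcal{B}\in\mathcal{F}$. The first step is to annihilate the entire $\mathbb{E}(c,0)$-row and column of $\mathcal{A}$ (hence of $\mathcal{B}$) via a complementarity argument. For $v\in\mathbb{E}(c,0)_+\subseteq\mathbb{E}_+$ we have $\mathcal{C}(v)=0$ because $\mathcal{C}$ acts as $0$ off $\mathbb{E}(c,1)$, so $v\bullet\mathcal{A}(v)+v\bullet\mathcal{B}(v)=v\bullet\mathcal{C}(v)=0$ with both terms nonnegative, whence both vanish. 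Thus $v$ minimizes the quadratic form of $\mathcal{A}$ over the convex set $\mathbb{E}_+$, and the first-order condition $\mathcal{A}(v)\bullet(x-v)\ge0$ for all $x\in\mathbb{E}_+$, combined with self-duality $\mathbb{E}_+^*=\mathbb{E}_+$, yields $\mathcal{A}(v)\in\mathbb{E}_+$ and $\mathcal{A}(v)\bullet v=0$. Taking $v$ in the relative interior of the face $\mathbb{E}(c,0)_+$, whose conjugate face is $\mathbb{E}(c,1)_+$, forces $\mathcal{A}(v)\in\mathbb{E}(c,1)_+$; the same holds for $\mathcal{B}(v)$, and since $\mathcal{A}(v)+\mathcal{B}(v)=\mathcal{C}(v)=0$ with both summands in the pointed cone $\mathbb{E}(c,1)_+$, we get $\mathcal{A}(v)=\mathcal{B}(v)=0$. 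Letting $v$ range over the relative interior gives $\mathcal{A}|_{\mathbb{E}(c,0)}=0$, i.e.\ $\mathcal{A}_{00}=\mathcal{A}_{\frac12,0}=\mathcal{A}_{1,0}=0$, and by Lemma~\ref{lem:sym_mapping_element} also $\mathcal{A}_{0,\frac12}=\mathcal{A}_{0,1}=0$; likewise for $\mathcal{B}$.

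After this reduction, Lemma~\ref{lem:quad_form} shows $x\bullet\mathcal{A}(x)=u\bullet\mathcal{A}_{\frac12,\frac12}(u)+2u\bullet\mathcal{A}_{\frac12,1}(w)+w\bullet\mathcal{A}_{11}(w)$, where $u=\mathcal{P}_{\mathbb{E}(c,\frac12)}(x)$ and $w=\mathcal{P}_{\mathbb{E}(c,1)}(x)$; in particular this value is independent of the $\mathbb{E}(c,0)$-component of $x$. The geometric input I need is a completion fact: for every $w\in\setint\mathbb{E}(c,1)_+$ and every $u\in\mathbb{E}(c,\tfrac12)$ there is $x\in\mathbb{E}_+$ realizing these two components (the Jordan-algebraic analogue of completing a positive-definite leading block of a symmetric matrix to a positive semidefinite one). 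Granting this, fix $w\in\setint\mathbb{E}(c,1)_+$, set $M:=\mathcal{A}_{\frac12,\frac12}=-\mathcal{B}_{\frac12,\frac12}$ and note $\mathcal{B}_{\frac12,1}=-\mathcal{A}_{\frac12,1}$; copositivity of $\mathcal{A}$ and of $\mathcal{B}$ then gives $u\bullet Mu+2u\bullet\mathcal{A}_{\frac12,1}(w)+\alpha\ge0$ and $-u\bullet Mu-2u\bullet\mathcal{A}_{\frac12,1}(w)+\beta\ge0$ for all $u\in\mathbb{E}(c,\tfrac12)$, with scalars $\alpha,\beta$. Replacing $u$ by $tu$ and letting $t\to\infty$ forces $M\succeq0$ from the first inequality and $M\preceq0$ from the second, so $M=0$; the remaining affine inequality $2u\bullet\mathcal{A}_{\frac12,1}(w)+\alpha\ge0$ for all $u$ then forces $\mathcal{A}_{\frac12,1}(w)=0$. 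As $w$ ranges over $\setint\mathbb{E}(c,1)_+$, which spans $\mathbb{E}(c,1)$, we conclude $\mathcal{A}_{\frac12,\frac12}=\mathcal{A}_{\frac12,1}=0$, and likewise for $\mathcal{B}$. Combined with the previous step, only the $\mathcal{A}_{11}$ block survives, so $\mathcal{A},\mathcal{B}\in\mathcal{F}$.

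I expect the completion fact of the last paragraph to be the main obstacle: in the matrix setting it is immediate from the Schur complement, but here it must be justified inside the Euclidean Jordan algebra. I would prove it either through a Jordan-algebraic Schur complement, using that $L_a$ is positive definite on $\mathbb{E}(c,\tfrac12)$ for $a\in\setint\mathbb{E}(c,1)_+$, or more robustly by showing $w+u+\lambda\bar c\in\setint\mathbb{E}_+$ for $\lambda$ large (with $\bar c=e-c$), via the characterization $z\in\setint\mathbb{E}_+\iff z\bullet s>0$ for all $s\in\mathbb{E}_+\setminus\{0\}$ together with the fact that the Peirce projections of elements of $\mathbb{E}_+$ land in the corresponding subcones. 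The complementarity step also relies on the standard description of conjugate faces of $\mathbb{E}_+$, which I would take from \cite{FK1994}.
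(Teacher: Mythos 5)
Your proof is correct in outline but takes a genuinely different route from the paper's. The paper extends $c$ to a full Jordan frame $c_1,\dots,c_r$, refines \eqref{eq:Peirce_concise} to the fine Peirce decomposition \eqref{eq:Peirce}, and kills each block $\mathcal{A}_{ij,kl}$ with an index below $p$ through an eleven-case analysis, each case built on an explicit curve $x(\epsilon)\in\mathbb{E}_+$ and an order-of-magnitude comparison of $\alpha(x(\epsilon))+\beta(x(\epsilon))=\gamma(x(\epsilon))$ as $\epsilon\downarrow 0$. You stay with the coarse three-block decomposition: the $\mathbb{E}(c,0)$ row and column are annihilated by a complementarity argument (a zero $v$ of a copositive quadratic form satisfies $\mathcal{A}(v)\in\mathbb{E}_+\cap\{v\}^\perp$, and for $v$ in the relative interior of $\mathbb{E}(c,0)_+$ that conjugate face is the pointed cone $\mathbb{E}(c,1)_+$), and the $\tfrac12$-blocks are annihilated by a completion-plus-scaling argument. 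Your route yields a shorter, more conceptual proof with no combinatorial bookkeeping; the paper's route is more elementary, using only explicit squares and the Peirce multiplication rules, at the price of the case analysis of Figure~\ref{fig:induction_proof}. All the individual steps you carry out are sound: the first-order optimality argument, the identification of the conjugate face, the cancellation in the pointed cone, and the scaling $u\mapsto tu$ (note that the completion $v$ there depends on $t$, which is harmless precisely because you have already shown the quadratic form does not see the $\mathbb{E}(c,0)$-component).

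The one statement you leave unproved, the completion fact, is true, and of your two suggested routes the second is the one to write out. For $t\neq 0$ the quadratic representation $P_{c+t\bar{c}}$ (with $\bar{c}=e-c$) acts on the Peirce components as $x_0+x_{1/2}+x_1\mapsto t^2x_0+tx_{1/2}+x_1$ and is a bijection of $\mathbb{E}_+$ onto itself, so $\lambda\bar{c}+u+w\in\mathbb{E}_+$ if and only if $\bar{c}+\lambda^{-1/2}u+w\in\mathbb{E}_+$. Since $\bar{c}+w$ lies in $\setint\mathbb{E}_+$ whenever $w\in\setint\mathbb{E}(c,1)_+$ (its spectrum is that of $\bar{c}$ in $\mathbb{E}(c,0)$ together with that of $w$ in $\mathbb{E}(c,1)$, all positive), openness of $\setint\mathbb{E}_+$ gives the claim for all sufficiently large $\lambda$. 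Two minor points to make explicit when writing this up: $\setint\mathbb{E}(c,1)_+$ must be read as the interior relative to $\mathbb{E}(c,1)$, which is what guarantees it spans $\mathbb{E}(c,1)$ in your final step; and the inclusion $\mathcal{P}_{\mathbb{E}(c,1)}(\mathbb{E}_+)\subseteq\mathbb{E}(c,1)_+$ used in your first paragraph is exactly \cite[Proposition~32]{Lourenco2021}, which the paper also invokes, so you may cite it rather than re-derive it from $P_c$.
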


\begin{proof}
{Since $\COP(\mathbb{E}(c,1)_+)$ is a convex cone, so is $\{0\}\oplus \COP(\mathbb{E}(c,1)_+)$.
To prove the inclusion, let $\mathcal{A}\in \{0\}\oplus \COP(\mathbb{E}(c,1)_+)$.
Then there exists $\mathcal{G}\in \COP(\mathbb{E}(c,1)_+)$ such that $\mathcal{A} = \mathcal{P}_{\mathbb{E}(c,1)}^* \circ \mathcal{G} \circ \mathcal{P}_{\mathbb{E}(c,1)}$.
For any $x\in \mathbb{E}_+$, we decompose it into $x = x_0 + x_1$, where $x_0\in \mathbb{E}(c,1)^\perp$ and $x_1\in \mathbb{E}(c,1)$.
It follows from \cite[Proposition~\mbox{32}]{Lourenco2021} that $x_1\in \mathbb{E}(c,1)_+$.
Combining it with $\mathcal{G}\in \COP(\mathbb{E}(c,1)_+)$ yields
\begin{align*}
x\bullet \mathcal{A}(x) &= x\bullet (\mathcal{P}_{\mathbb{E}(c,1)}^* \circ \mathcal{G} \circ \mathcal{P}_{\mathbb{E}(c,1)})(x)\\
&= x_1\bullet \mathcal{G}(x_1)\\
&\ge 0,
\end{align*}
which implies that $\mathcal{A}\in \COP(\mathbb{E}_+)$.
Therefore, $\{0\}\oplus \COP(\mathbb{E}(c,1)_+)$ is a convex subcone of $\COP(\mathbb{E}_+)$.}

Let $r$ be the rank of the Euclidean Jordan algebra $\mathbb{E}$.
We decompose the idempotent $c$ into the sum of orthogonal primitive idempotents $c_p,\dots,c_r$.
We can find $c_1,\dots,c_{p-1}$ such that $c_1,\dots,c_r$ is a Jordan frame of $\mathbb{E}$.
Indeed, since $\mathbb{E}(c,0)$ is a Euclidean Jordan subalgebra of rank $p-1$, we can take a Jordan frame $c_1,\dots,c_{p-1}$ of $\mathbb{E}(c,0)$ by \cite[Theorem~\mbox{\RNum{3}.1.2}]{FK1994}.
Since the identity element of $\mathbb{E}(c,0)$ is $e - c$, the Jordan frame satisfies $e-c = \sum_{i=1}^{p-1}c_i$.
Then $c_1,\dots,c_r$ is a Jordan frame of $\mathbb{E}$.
See also \cite[Lemma~\mbox{23}]{LT2020} for a related result on frame extension.

Consider the Peirce decomposition~\eqref{eq:Peirce} with respect to the Jordan frame $c_1,\dots,c_r$.
We note that $c_p,\dots,c_r$ is a Jordan frame of the algebra $ \mathbb{E}(c,1)$, so we have
\begin{equation}\label{eq:Ec1}
\mathbb{E}(c,1) = \bigoplus_{p\le i\le j\le r}\mathbb{E}_{ij},
\end{equation}
which follows from \eqref{eq:Peirce} applied to $\mathbb{E}(c,1)$.
Let $\preceq$ be the lexicographical order, whence the elements in the set $\{(i,j) \mid 1\le i\le j\le r\}$ are ordered as $(1,1) \preceq (1,2) \preceq \cdots \preceq (1,r) \preceq (2,2) \preceq \cdots \preceq (r,r)$.
For simplicity, when there is no danger of confusion, we write $ij$ for $(i,j)$.
In accordance with the orthogonal decomposition~\eqref{eq:Peirce}, for $\mathcal{A} \in \mathcal{S}(\mathbb{E})$, we use the following matrix-like notation:
\begin{equation*}
\mathcal{A} = (\mathcal{A}_{ij,kl})_{\substack{1\le i \le j\le r \\ 1\le k\le l\le r\\
ij \preceq kl}}.
\end{equation*}
Recall that $\mathcal{A}_{ij,kl} = \mathcal{P}_{\mathbb{E}_{ij}} \circ \mathcal{A}|_{\mathbb{E}_{kl}}$ is the $(ij,kl)$th element of $\mathcal{A}$.

For $\mathcal{A},\mathcal{B}\in \COP(\mathbb{E}_+)$, we suppose that $\mathcal{A} + \mathcal{B} \in \{0\}\oplus \COP(\mathbb{E}(c,1)_+)$.
Then there exists $\mathcal{G}\in \COP(\mathbb{E}(c,1)_+)$ such that
\begin{equation}
\mathcal{A} + \mathcal{B} = \mathcal{P}_{\mathbb{E}(c,1)}^* \circ \mathcal{G} \circ \mathcal{P}_{\mathbb{E}(c,1)}. \label{eq:COP_symcone_face_asm}
\end{equation}

We see that $\mathcal{P}_{\mathbb{E}(c,1)}\circ \mathcal{A}|_{\mathbb{E}(c,1)}$  belongs to $\COP(\mathbb{E}(c,1)_+)$.
Indeed, let $x\in \mathbb{E}(c,1)_+$ be arbitrary.
Then we have
\begin{equation*}
0 \overset{\scriptsize \text{(a)}}\le x \bullet \mathcal{A}(x) \overset{\scriptsize \text{(b)}}= x \bullet \mathcal{A}|_{\mathbb{E}(c,1)}(x) \overset{\scriptsize \text{(c)}}= x \bullet (\mathcal{P}_{\mathbb{E}(c,1)}\circ \mathcal{A}|_{\mathbb{E}(c,1)})(x),
\end{equation*}
where (a) holds because $x \in \mathbb{E}(c,1)_+ \subseteq \mathbb{E}_+$ and $\mathcal{A} \in \COP(\mathbb{E}_+)$, (b) follows from $x \in \mathbb{E}(c,1)$, and (c) is a consequence of the orthogonality of \eqref{eq:Peirce_concise}.
In addition, $\mathcal{P}_{\mathbb{E}(c,1)}\circ \mathcal{A}|_{\mathbb{E}(c,1)}$ is the ``principal submatrix'' of $\mathcal{A}$ obtained by extracting the $\frac{1}{2}(r-p+1)(r-p+2)$ rows and columns indexed by $(p,p),(p,p+1)\dots,(r,r)$.
The same is true for $\mathcal{B}$.

Thus, to prove $\mathcal{A},\mathcal{B}\in \{0\}\oplus \COP(\mathbb{E}(c,1)_+)$, it is sufficient to show that
\begin{equation}
\mathcal{A}_{ij,kl} = \mathcal{B}_{ij,kl} = 0 \label{eq:to_prove}
\end{equation}
for all $(i,j,k,l)$ satisfying $1\le i\le p-1$, $1 \le i \le j\le r$, $1 \le k \le l \le r$, and $ij \preceq kl$.
See Figure~\ref{fig:induction_proof} for an illustration of this proof.
\begin{figure}
\begin{center}
\includegraphics{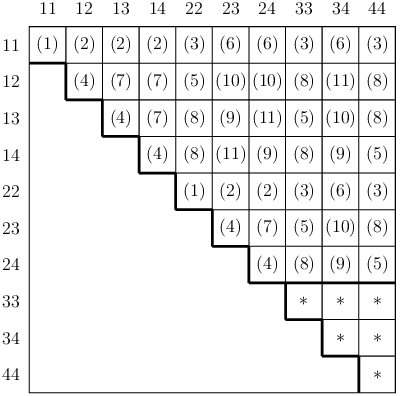}
\end{center}
\caption{Illustration of the proof of Lemma~\ref{lem:COP_symcone_face} in the case of $r =  4$ and $p = 3$.
The cells marked $(i)$ correspond to Case~$i$ for each $i = 1,\dots,11$.
The $3\times 3$ lower-right block marked the symbol $*$ corresponds to $\mathcal{P}_{\mathbb{E}(c,1)}\circ \mathcal{A}|_{\mathbb{E}(c,1)}$ and $\mathcal{P}_{\mathbb{E}(c,1)}\circ \mathcal{B}|_{\mathbb{E}(c,1)}$.
}
\label{fig:induction_proof}
\end{figure}
We also recall that $\mathcal{A}_{ij,kl}$ is a linear mapping between $\mathbb{E}_{kl}$ and
$\mathbb{E}_{ij}$, so we have the following characterization:
\begin{equation}\label{eq:zero_condition}
 \mathcal{A}_{ij,kl} = 0 \Longleftrightarrow x_{ij}\bullet\mathcal{A}_{ij,kl}(x_{kl}) = 0 \text{ {for all $x_{ij} \in \mathbb{E}_{ij}$ and $x_{kl} \in \mathbb{E}_{kl}$}}.
\end{equation}
Moving on, a {quadruple} $(i,j,k,l)$ satisfying {the following three conditions}
\begin{equation}
i \le j,\ k \le l,\ ij \preceq kl \label{eq:three_conds}
\end{equation}
must fall into exactly one of the following eleven cases:
\begin{enumerate}[C{a}se~1:]
\setlength{\leftskip}{1.5em}
\item $i = j = k = l$; \label{enum:case1}
\item $i = j = k \neq l$;\label{enum:case2}
\item $i = j \neq k = l$;\label{enum:case3}
\item $i = k \neq j = l$;\label{enum:case4}
\item $i \neq j = k = l$;\label{enum:case5}
\item $i = j$ and $i,k,l$ are different from each other;\label{enum:case6}
\item $i = k$ and $i,j,l$ are different from each other;\label{enum:case7}
\item $k = l$ and $i,j,l$ are different from each other;\label{enum:case8}
\item $j = l$ and $i,k,l$ are different from each other;\label{enum:case9}
\item $j = k$ and $i,j,l$ are different from each other;\label{enum:case10}
\item $i,j,k,l$ are different from each other.\label{enum:case11}
\end{enumerate}
See \ref{apdx:eleven_cases} for the reasoning behind this case division.
In the following discussion, for each $x \in \mathbb{E}$, we write $\alpha(x) \coloneqq x\bullet \mathcal{A}(x)$ and $\beta(x) \coloneqq x\bullet \mathcal{B}(x)$.
Similarly, we define
\begin{equation}
\gamma(x) \coloneqq x \bullet (\mathcal{P}_{\mathbb{E}(c,1)}^* \circ \mathcal{G} \circ \mathcal{P}_{\mathbb{E}(c,1)})(x). \label{eq:COP_symcone_face_gamma}
\end{equation}
Note that by \eqref{eq:COP_symcone_face_asm}, we have
\begin{equation}
\alpha(x) + \beta(x) = \gamma(x) \label{eq:COP_symcone_face_abg}
\end{equation}
for all $x\in \mathbb{E}$.
Also note that $\alpha(x)$ and $\beta(x)$ are nonnegative for any $x\in \mathbb{E}_+$ since $\mathcal{A},\mathcal{B} \in \COP(\mathbb{E}_+)$.

In what follows, in order to show that \eqref{eq:to_prove} holds for any $(i,j,k,l)$ satisfying each of the eleven cases above, we adopt the following convention.
Whenever we represent a linear transformation in the matrix-like form as in \eqref{eq:matrix_form}, if the symbol $(i)$ appears in a matrix entry, then it means
that entry is $0$ because of Case~$i$.
For example, in \eqref{eq:COP_symcone_face_case2_matrixlike}, ``\eqref{enum:case1}'' indicates that the corresponding entry is $0$ because of Case~\ref{enum:case1}.
In \eqref{eq:COP_symcone_face_case5_matrixlike}, four entries are $0$ because of Cases~\ref{enum:case1} to~\ref{enum:case4}.

\fbox{Case~\ref{enum:case1}}
On the one hand, since $c_i \in \mathbb{E}_+$, we have $\alpha(c_i),\beta(c_i) \ge 0$.
On the other hand, it follows from $c_i \not\in \mathbb{E}(c,1)$ and \eqref{eq:COP_symcone_face_gamma} that $\gamma(c_i) = 0$.
Therefore, by \eqref{eq:COP_symcone_face_abg}, both $\alpha(c_i) = c_i \bullet \mathcal{A}_{ii,ii}(c_i)$ and $\beta(c_i) = c_i \bullet \mathcal{B}_{ii,ii}(c_i)$ must be $0$.
Since $\mathbb{E}_{ii} = \mathbb{R}c_i$ (see \eqref{eq:Eij}), we have
\begin{equation*}
(a c_i) \bullet \mathcal{A}_{ii,ii}(bc_i)= ab (c_i \bullet \mathcal{A}_{ii,ii}(c_i)) = 0
\end{equation*}
for all $a,b\in\mathbb{R}$.
In view of \eqref{eq:zero_condition} and since an analogous argument holds for $\mathcal{B}$, we obtain $\mathcal{A}_{ii,ii} = \mathcal{B}_{ii,ii} = 0$.

\fbox{Case~\ref{enum:case2}}
For any $x_{il} \in \mathbb{E}_{il}$ and $\epsilon > 0$, let
\begin{equation*}
x(\epsilon) \coloneqq (c_i + \epsilon x_{il})^2 = \underbrace{c_i + \epsilon^2c_i\circ x_{il}^2}_{\in \mathbb{E}_{ii}} + \underbrace{\epsilon x_{il}}_{\in \mathbb{E}_{il}} + \underbrace{\epsilon^2c_l\circ x_{il}^2}_{\in \mathbb{E}_{ll}} \in \mathbb{E}_+,
\end{equation*}
where the last equality holds by \eqref{eq:Eij} and \eqref{eq:xij2}.
Then on the one hand, using Lemma~\ref{lem:quad_form} and considering the orthogonality between the $\mathbb{E}_{ij}$ we have
\begin{align}
0 &\le \alpha(x(\epsilon))\nonumber\\
& = x(\epsilon)\bullet\!\!\!\bordermatrix{
& ii & il & ll \cr
& \eqref{enum:case1} & \mathcal{A}_{ii,il} & \mathcal{A}_{ii,ll} \cr
&  & \mathcal{A}_{il,il} & \mathcal{A}_{il,ll} \cr
&  &  & \mathcal{A}_{ll,ll}}(x(\epsilon)) \label{eq:COP_symcone_face_case2_matrixlike}\\
&= 2\epsilon c_i\bullet \mathcal{A}_{ii,il}(x_{il}) + \epsilon^2\{x_{il}\bullet \mathcal{A}_{il,il}(x_{il}) + 2c_i\bullet \mathcal{A}_{ii,ll}(c_l\circ x_{il}^2)\}\nonumber\\
&\quad +2\epsilon^3\{(c_i\circ x_{il}^2)\bullet\mathcal{A}_{ii,il}(x_{il}) + x_{il}\bullet \mathcal{A}_{il,ll}(c_l\circ x_{il}^2)\} + O(\epsilon^4).\label{eq:COP_symcone_face_case2_alpha}
\end{align}
The matrix-like notation in \eqref{eq:COP_symcone_face_case2_matrixlike} is for the case $i < l$, but the calculation is also valid if $l < i$.
By replacing $\mathcal{A}$ in \eqref{eq:COP_symcone_face_case2_alpha} with $\mathcal{B}$, we can also calculate $\beta(x(\epsilon))$, which is greater that or equal to $0$.
On the other hand, recalling \eqref{eq:Peirce_concise}, \eqref{eq:Ec1}, and the fact that $i \leq p-1$ holds by assumption, we have
\begin{equation*}
\mathcal{P}_{\mathbb{E}(c,1)}(x(\epsilon)) = \begin{cases} \epsilon^2c_l\circ x_{il}^2 & (\text{if $p\le l\le r$}),\\
0 & (\text{otherwise}),
\end{cases}
\end{equation*}
which implies that $\gamma(x(\epsilon)) = O(\epsilon^4)$.
Therefore, setting $x = x(\epsilon)$ in  \eqref{eq:COP_symcone_face_abg}, dividing by $2\epsilon$, and letting $\epsilon \downarrow 0$, we obtain
\begin{equation*}
c_i\bullet \mathcal{A}_{ii,il}(x_{il}) + c_i\bullet \mathcal{B}_{ii,il}(x_{il}) = 0.
\end{equation*}
Since $\alpha(x(\epsilon))$ is nonnegative for all $\epsilon > 0$,
\begin{equation*}
c_i\bullet \mathcal{A}_{ii,il}(x_{il}) = \lim_{\epsilon \downarrow 0}\frac{\alpha(x(\epsilon))}{2\epsilon}
\end{equation*}
is also nonnegative.
Similarly, $c_i\bullet \mathcal{B}_{ii,il}(x_{il})$ is nonnegative.
Thus, we have
\begin{equation*}
c_i\bullet \mathcal{A}_{ii,il}(x_{il}) = c_i\bullet \mathcal{B}_{ii,il}(x_{il}) = 0.
\end{equation*}
Since $\mathbb{E}_{ii} = \mathbb{R}c_i$ and $x_{il} \in \mathbb{E}_{il}$ is arbitrary, we obtain $\mathcal{A}_{ii,il} = \mathcal{B}_{ii,il} = 0$, again
by \eqref{eq:zero_condition}.

\fbox{Case~\ref{enum:case3}}
For any $\epsilon > 0$, let $x(\epsilon) \coloneqq c_i + \epsilon c_l \in\mathbb{E}_+$.
Then on the one hand, from Case~\ref{enum:case1}, we have $\mathcal{A}_{ii,ii} = 0$ so
\begin{equation*}
0 \le \alpha(x(\epsilon)) = 2\epsilon c_i\bullet \mathcal{A}_{ii,ll}(c_l) + \epsilon^2c_l\bullet \mathcal{A}_{ll,ll}(c_l).
\end{equation*}
On the other hand, we have
\begin{equation*}
\mathcal{P}_{\mathbb{E}(c,1)}(x(\epsilon)) = \begin{cases} \epsilon c_l & (\text{if $p\le l\le r$}),\\
0 & (\text{otherwise}),
\end{cases}
\end{equation*}
which implies that $\gamma(x(\epsilon)) = O(\epsilon^2)$.
Therefore, setting $x = x(\epsilon)$ in \eqref{eq:COP_symcone_face_abg}, dividing by $2\epsilon$, and letting $\epsilon \downarrow 0$, we obtain
\begin{equation*}
c_i\bullet \mathcal{A}_{ii,ll}(c_l) = c_i\bullet \mathcal{B}_{ii,ll}(c_l) = 0.
\end{equation*}
Recalling that $\mathbb{E}_{ii} = \mathbb{R}c_i$ and $\mathbb{E}_{ll} = \mathbb{R}c_l$ hold, we obtain $\mathcal{A}_{ii,ll} = \mathcal{B}_{ii,ll} = 0$ from \eqref{eq:zero_condition}.

\fbox{Case~\ref{enum:case4}}
For any $x_{il} \in \mathbb{E}_{il}$ and $\epsilon > 0$, let $x(\epsilon)$ be the same as in Case~\ref{enum:case2}.
Then on the one hand, it follows from Cases~\ref{enum:case1}, \ref{enum:case2}, \ref{enum:case3}, and a computation analogous to \eqref{eq:COP_symcone_face_case2_alpha} that
\begin{equation*}
0 \le \alpha(x(\epsilon)) = \epsilon^2x_{il}\bullet \mathcal{A}_{il,il}(x_{il}) + O(\epsilon^3).
\end{equation*}
On the other hand, we have $\gamma(x(\epsilon)) = O(\epsilon^4)$.
Therefore, setting $x = x(\epsilon)$ in \eqref{eq:COP_symcone_face_abg}, dividing by $\epsilon^2$, and letting $\epsilon \downarrow 0$, we obtain
\begin{equation*}
x_{il}\bullet \mathcal{A}_{il,il}(x_{il}) = x_{il}\bullet \mathcal{B}_{il,il}(x_{il}) = 0.
\end{equation*}
Since $x_{il} \in \mathbb{E}_{il}$ is arbitrary, we obtain $\mathcal{A}_{il,il} = \mathcal{B}_{il,il} = 0$.

\fbox{Case~\ref{enum:case5}}
For any $x_{il} \in \mathbb{E}_{il}$ and $\epsilon > 0$, let
\begin{equation*}
x(\epsilon) \coloneqq (c_i + \epsilon^2x_{il})^2 + \epsilon^3c_l = \underbrace{c_i + \epsilon^4c_i\circ x_{il}^2}_{\in \mathbb{E}_{ii}} + \underbrace{\epsilon^2x_{il}}_{\in \mathbb{E}_{il}} + \underbrace{\epsilon^3c_l + \epsilon^4c_l\circ x_{il}^2}_{\in \mathbb{E}_{ll}} \in \mathbb{E}_+.
\end{equation*}
Then on the one hand, we have
\begin{align}
0 &\le \alpha(x(\epsilon)) \nonumber\\
&= x(\epsilon)\bullet\!\!\!\bordermatrix{
& ii & il & ll \cr
& \eqref{enum:case1} & \eqref{enum:case2} & \eqref{enum:case3} \cr
&  & \eqref{enum:case4} & \mathcal{A}_{il,ll} \cr
&  &  & \mathcal{A}_{ll,ll}}(x(\epsilon)) \label{eq:COP_symcone_face_case5_matrixlike}\\
&= 2\epsilon^5x_{il}\bullet \mathcal{A}_{il,ll}(c_l) + O(\epsilon^6).\nonumber
\end{align}
On the other hand, we have $\gamma(x(\epsilon)) = O(\epsilon^6)$.
Therefore, setting $x = x(\epsilon)$ in \eqref{eq:COP_symcone_face_abg}, dividing by $2\epsilon^5$, and letting $\epsilon \downarrow 0$, we obtain
\begin{equation*}
x_{il}\bullet \mathcal{A}_{il,ll}(c_l) = x_{il}\bullet \mathcal{B}_{il,ll}(c_l) = 0.
\end{equation*}
Since $x_{il} \in \mathbb{E}_{il}$ is arbitrary, we obtain $\mathcal{A}_{il,ll} = \mathcal{B}_{il,ll} = 0$.

\fbox{Case~\ref{enum:case6}}
For any $x_{kl} \in \mathbb{E}_{kl}$ and $\epsilon > 0$, let
\begin{align*}
x(\epsilon) \coloneqq c_i + \epsilon(c_k + x_{kl})^2 = \underbrace{c_i}_{\in \mathbb{E}_{ii}} + \underbrace{\epsilon c_k + \epsilon c_k\circ x_{kl}^2}_{\in \mathbb{E}_{kk}} + \underbrace{\epsilon x_{kl}}_{\in \mathbb{E}_{kl}} + \underbrace{\epsilon c_l\circ x_{kl}^2}_{\in \mathbb{E}_{ll}} \in \mathbb{E}_+.
\end{align*}
Then on the one hand, we have
\begin{align*}
0 &\le \alpha(x(\epsilon))\nonumber\\
&= x(\epsilon)\bullet\!\!\!\bordermatrix{
& ii & kk & kl & ll \cr
& \eqref{enum:case1} & \eqref{enum:case3} & \mathcal{A}_{ii,kl} & \eqref{enum:case3} \cr
&  & \mathcal{A}_{kk,kk} & \mathcal{A}_{kk,kl} & \mathcal{A}_{kk,ll}\cr
&  &  & \mathcal{A}_{kl,kl} & \mathcal{A}_{kl,ll} \cr
&  &  &  & \mathcal{A}_{ll,ll}
}(x(\epsilon)) \label{eq:COP_symcone_face_case6_matrixlike}\\
&= 2\epsilon c_i\bullet \mathcal{A}_{ii,kl}(x_{kl}) + O(\epsilon^2).\nonumber
\end{align*}
On the other hand, we have $\gamma(x(\epsilon)) = O(\epsilon^2)$.
Therefore, setting $x = x(\epsilon)$ in \eqref{eq:COP_symcone_face_abg}, dividing by $2\epsilon$, and letting $\epsilon \downarrow 0$, we obtain
\begin{equation*}
c_i\bullet \mathcal{A}_{ii,kl}(x_{kl}) = c_i\bullet \mathcal{B}_{ii,kl}(x_{kl}) = 0.
\end{equation*}
Since $x_{kl} \in \mathbb{E}_{kl}$ is arbitrary, we obtain $\mathcal{A}_{ii,kl} = \mathcal{B}_{ii,kl} = 0$.

\fbox{Case~\ref{enum:case7}}
For any $x_{ij}\in\mathbb{E}_{ij}$, $x_{il}\in\mathbb{E}_{il}$, and $\epsilon > 0$, let
\begin{align*}
x(\epsilon) &\coloneqq (c_i + \epsilon x_{ij})^2 + (c_i + \epsilon x_{il})^2\\
&= \underbrace{2c_i + \epsilon^2c_i\circ x_{ij}^2 + \epsilon^2 c_i\circ x_{il}^2}_{\in \mathbb{E}_{ii}} + \underbrace{\epsilon x_{ij}}_{\in \mathbb{E}_{ij}} + \underbrace{\epsilon x_{il}}_{\in \mathbb{E}_{il}}  + \underbrace{\epsilon^2 c_j\circ x_{ij}^2}_{\in \mathbb{E}_{jj}} + \underbrace{\epsilon^2 c_l\circ x_{il}^2}_{\in \mathbb{E}_{ll}} \in \mathbb{E}_+.
\end{align*}
Then on the one hand, we have
\begin{align*}
0 &\le \alpha(x(\epsilon))\nonumber\\
& = x(\epsilon)\bullet\!\!\!\bordermatrix{
& ii & ij & il & jj & ll \cr
& \eqref{enum:case1} & \eqref{enum:case2} & \eqref{enum:case2} & \eqref{enum:case3} & \eqref{enum:case3}\cr
&  & \eqref{enum:case4} & \mathcal{A}_{ij,il} & \eqref{enum:case5} & \mathcal{A}_{ij,ll}\cr
&  &  & \eqref{enum:case4} & \mathcal{A}_{il,jj} & \eqref{enum:case5} \cr
&  &  &  & \mathcal{A}_{jj,jj} & \mathcal{A}_{jj,ll} \cr
&  &  &  &  & \mathcal{A}_{ll,ll}
}(x(\epsilon)) \label{eq:COP_symcone_face_case7_matrixlike}\\
&= 2\epsilon^2 x_{ij}\bullet \mathcal{A}_{ij,il}(x_{il}) + O(\epsilon^3).\nonumber
\end{align*}
On the other hand, we have $\gamma(x(\epsilon)) = O(\epsilon^4)$.
Therefore, setting $x = x(\epsilon)$ in \eqref{eq:COP_symcone_face_abg}, dividing by $2\epsilon^2$, and letting $\epsilon \downarrow 0$, we obtain
\begin{equation*}
x_{ij}\bullet \mathcal{A}_{ij,il}(x_{il}) = x_{ij}\bullet \mathcal{B}_{ij,il}(x_{il}) = 0.
\end{equation*}
Since $x_{ij} \in \mathbb{E}_{ij}$ and $x_{il} \in \mathbb{E}_{il}$ are arbitrary, we obtain $\mathcal{A}_{ij,il} = \mathcal{B}_{ij,il} = 0$.

\fbox{Case~\ref{enum:case8}}
For any $x_{ij}\in\mathbb{E}_{ij}$ and $\epsilon > 0$, let
\begin{align*}
x(\epsilon) \coloneqq (c_i + \epsilon x_{ij})^2 + \epsilon^2c_l = \underbrace{c_i + \epsilon^2c_i\circ x_{ij}^2}_{\in \mathbb{E}_{ii}} + \underbrace{\epsilon x_{ij}}_{\in \mathbb{E}_{ij}} + \underbrace{\epsilon^2 c_j\circ x_{ij}^2}_{\in \mathbb{E}_{jj}} + \underbrace{\epsilon^2 c_l}_{\in \mathbb{E}_{ll}} \in \mathbb{E}_+.
\end{align*}
Then on the one hand, we have
\begin{align*}
0 &\le \alpha(x(\epsilon))\nonumber\\
&= x(\epsilon)\bullet\!\!\!\bordermatrix{
& ii & ij & jj & ll \cr
& \eqref{enum:case1} & \eqref{enum:case2} & \eqref{enum:case3} & \eqref{enum:case3} \cr
&  & \eqref{enum:case4} & \eqref{enum:case5} & \mathcal{A}_{ij,ll}\cr
&  &  & \mathcal{A}_{jj,jj} & \mathcal{A}_{jj,ll} \cr
&  &  &  & \mathcal{A}_{ll,ll}
}(x(\epsilon)) \label{eq:COP_symcone_face_case8_matrixlike}\\
&= 2\epsilon^3 x_{ij}\bullet \mathcal{A}_{ij,ll}(c_l) + O(\epsilon^4).\nonumber
\end{align*}
On the other hand, we have $\gamma(x(\epsilon)) = O(\epsilon^4)$.
Therefore, setting $x = x(\epsilon)$ in \eqref{eq:COP_symcone_face_abg}, dividing by $2\epsilon^3$, and letting $\epsilon \downarrow 0$, we obtain
\begin{equation*}
x_{ij}\bullet \mathcal{A}_{ij,ll}(c_l) = x_{ij}\bullet \mathcal{B}_{ij,ll}(c_l) = 0.
\end{equation*}
Since $x_{ij} \in \mathbb{E}_{ij}$ is arbitrary, we obtain $\mathcal{A}_{ij,ll} = \mathcal{B}_{ij,ll} = 0$.

\fbox{Case~\ref{enum:case9}}
For any $x_{il}\in\mathbb{E}_{il}$, $x_{kl} \in \mathbb{E}_{kl}$, and $\epsilon > 0$, let
\begin{align*}
x(\epsilon) &\coloneqq (c_i + \epsilon x_{il})^2 + \epsilon^2(c_k + x_{kl})^2\\
&= \underbrace{c_i + \epsilon^2c_i\circ x_{il}^2}_{\in \mathbb{E}_{ii}} + \underbrace{\epsilon x_{il}}_{\in \mathbb{E}_{il}} \\
&\quad + \underbrace{\epsilon^2c_k + \epsilon^2c_k\circ x_{kl}^2}_{\in \mathbb{E}_{kk}} + \underbrace{\epsilon^2 x_{kl}}_{\in \mathbb{E}_{kl}}
+ \underbrace{\epsilon^2c_l\circ x_{kl}^2 + \epsilon^2 c_l\circ x_{il}^2}_{\in \mathbb{E}_{ll}} \in \mathbb{E}_+.
\end{align*}
Then on the one hand, we have
\begin{align*}
0 &\le \alpha(x(\epsilon))\nonumber\\
& = x(\epsilon)\bullet\!\!\!\bordermatrix{
& ii & il & kk & kl & ll \cr
& \eqref{enum:case1} & \eqref{enum:case2} & \eqref{enum:case3} & \eqref{enum:case6} & \eqref{enum:case3}\cr
&  & \eqref{enum:case4} & \eqref{enum:case8} & \mathcal{A}_{il,kl} & \eqref{enum:case5}\cr
&  &  & \mathcal{A}_{kk,kk} & \mathcal{A}_{kk,kl} & \mathcal{A}_{kk,ll} \cr
&  &  &  & \mathcal{A}_{kl,kl} & \mathcal{A}_{kl,ll} \cr
&  &  &  &  & \mathcal{A}_{ll,ll}
}(x(\epsilon)) \label{eq:COP_symcone_face_case9_matrixlike}\\
&= 2\epsilon^3 x_{il}\bullet \mathcal{A}_{il,kl}(x_{kl}) + O(\epsilon^4).\nonumber
\end{align*}
On the other hand, we have $\gamma(x(\epsilon)) = O(\epsilon^4)$.
Therefore, setting $x = x(\epsilon)$ in \eqref{eq:COP_symcone_face_abg}, dividing by $2\epsilon^3$, and letting $\epsilon \downarrow 0$, we obtain
\begin{equation*}
x_{il}\bullet \mathcal{A}_{il,kl}(x_{kl}) = x_{il}\bullet \mathcal{B}_{il,kl}(x_{kl}) = 0.
\end{equation*}
Since $x_{il} \in \mathbb{E}_{il}$ and $x_{kl} \in \mathbb{E}_{kl}$ are arbitrary, we obtain $\mathcal{A}_{il,kl} = \mathcal{B}_{il,kl} = 0$.

\fbox{Case~\ref{enum:case10}}
For any $x_{ij}\in\mathbb{E}_{ij}$, $x_{jl}\in\mathbb{E}_{jl}$, and $\epsilon > 0$, let
\begin{align*}
x(\epsilon) &\coloneqq (c_i + \epsilon x_{ij})^2 + \epsilon^2(c_j + x_{jl})^2\\
&= \underbrace{c_i + \epsilon^2c_i\circ x_{ij}^2}_{\in \mathbb{E}_{ii}} + \underbrace{\epsilon x_{ij}}_{\in \mathbb{E}_{ij}} \\
&\quad + \underbrace{\epsilon^2c_j + \epsilon^2c_j\circ x_{ij}^2 + \epsilon^2c_j\circ x_{jl}^2}_{\in \mathbb{E}_{jj}} + \underbrace{\epsilon^2x_{jl}}_{\in \mathbb{E}_{jl}} + \underbrace{\epsilon^2c_l\circ x_{jl}^2}_{\in \mathbb{E}_{ll}} \in \mathbb{E}_+.
\end{align*}
Then on the one hand, we have
\begin{align*}
0 &\le \alpha(x(\epsilon))\nonumber\\
& = x(\epsilon)\bullet\!\!\!\bordermatrix{
& ii & ij & jj & jl & ll \cr
& \eqref{enum:case1} & \eqref{enum:case2} & \eqref{enum:case3} & \eqref{enum:case6} & \eqref{enum:case3}\cr
&  & \eqref{enum:case4} & \eqref{enum:case5} & \mathcal{A}_{ij,jl} & \eqref{enum:case8}\cr
&  &  & \mathcal{A}_{jj,jj} & \mathcal{A}_{jj,jl} & \mathcal{A}_{jj,ll} \cr
&  &  &  & \mathcal{A}_{jl,jl} & \mathcal{A}_{jl,ll} \cr
&  &  &  &  & \mathcal{A}_{ll,ll}
}(x(\epsilon)) \label{eq:COP_symcone_face_case10_matrixlike}\\
&= 2\epsilon^3 x_{ij}\bullet \mathcal{A}_{ij,jl}(x_{jl}) + O(\epsilon^4).\nonumber
\end{align*}
On the other hand, we have $\gamma(x(\epsilon)) = O(\epsilon^4)$.
Therefore, setting $x = x(\epsilon)$ in \eqref{eq:COP_symcone_face_abg}, dividing by $2\epsilon^3$, and letting $\epsilon \downarrow 0$, we obtain
\begin{equation*}
x_{ij}\bullet \mathcal{A}_{ij,jl}(x_{jl}) = x_{ij}\bullet \mathcal{B}_{ij,jl}(x_{jl}) = 0.
\end{equation*}
Since $x_{ij} \in \mathbb{E}_{ij}$ and $x_{jl} \in \mathbb{E}_{jl}$ are arbitrary, we obtain $\mathcal{A}_{ij,jl} = \mathcal{B}_{ij,jl} = 0$.

\fbox{Case~\ref{enum:case11}}
For any $x_{ij}\in\mathbb{E}_{ij}$, $x_{kl}\in\mathbb{E}_{kl}$, and $\epsilon > 0$, let
\begin{align*}
x(\epsilon) &\coloneqq (c_i + \epsilon x_{ij})^2 + \epsilon^2(c_k + x_{kl})^2\\
&= \underbrace{c_i + \epsilon^2 c_i\circ x_{ij}^2}_{\in \mathbb{E}_{ii}} + \underbrace{\epsilon x_{ij}}_{\in \mathbb{E}_{ij}} + \underbrace{\epsilon^2 c_j\circ x_{ij}^2}_{\mathbb{E}_{jj}}\\
&\quad + \underbrace{\epsilon^2 c_k + \epsilon^2 c_k\circ x_{kl}^2}_{\in \mathbb{E}_{kk}} + \underbrace{\epsilon^2 x_{kl}}_{\in \mathbb{E}_{kl}} + \underbrace{\epsilon^2 c_l\circ x_{kl}^2}_{\in \mathbb{E}_{ll}} \in \mathbb{E}_+.
\end{align*}
Then on the one hand, we have
\begin{align*}
0 &\le \alpha(x(\epsilon)) \nonumber \\
& = x(\epsilon)\bullet\!\!\!\bordermatrix{
& ii & ij & jj & kk & kl & ll \cr
& \eqref{enum:case1} & \eqref{enum:case2} & \eqref{enum:case3} & \eqref{enum:case3} &\eqref{enum:case6} & \eqref{enum:case3}\cr
&  & \eqref{enum:case4} & \eqref{enum:case5} & \eqref{enum:case8} & \mathcal{A}_{ij,kl} & \eqref{enum:case8}\cr
&  &  & \mathcal{A}_{jj,jj} & \mathcal{A}_{jj,kk} & \mathcal{A}_{jj,kl} & \mathcal{A}_{jj,ll}\cr
&  &  &  & \mathcal{A}_{kk,kk} & \mathcal{A}_{kk,kl} & \mathcal{A}_{kk,ll} \cr
&  &  &  &  & \mathcal{A}_{kl,kl} & \mathcal{A}_{kl,ll} \cr
&  &  &  &  &  & \mathcal{A}_{ll,ll} \cr
}(x(\epsilon)) \label{eq:COP_symcone_face_case11_matrixlike}\\
&= 2\epsilon^3 x_{ij}\bullet \mathcal{A}_{ij,kl}(x_{kl}) + O(\epsilon^4).\nonumber
\end{align*}
On the other hand, we have $\gamma(x(\epsilon)) = O(\epsilon^4)$.
Therefore, setting $x = x(\epsilon)$ in \eqref{eq:COP_symcone_face_abg}, dividing by $2\epsilon^3$, and letting $\epsilon \downarrow 0$, we obtain
\begin{equation*}
x_{ij}\bullet \mathcal{A}_{ij,kl}(x_{kl}) = x_{ij}\bullet \mathcal{B}_{ij,kl}(x_{kl}) = 0.
\end{equation*}
Since $x_{ij} \in \mathbb{E}_{ij}$ and $x_{kl} \in \mathbb{E}_{kl}$ are arbitrary, we obtain $\mathcal{A}_{ij,kl} = \mathcal{B}_{ij,kl} = 0$.
\end{proof}

\begin{corollary}\label{cor:COP_symcone_face_geometric}
Let $\mathbb{K}$ be a symmetric cone in a finite-dimensional real inner product space $\mathbb{E}$, and let $\mathbb{F}$ be a face of $\mathbb{K}$.
Also, let $\mathbb{V}_1 \coloneqq  (\setspan\mathbb{F})^\perp$, $\mathbb{V}_2 \coloneqq \setspan\mathbb{F}$, so that $\mathbb{E} = \mathbb{V}_1 \oplus \mathbb{V}_2$.
Then following the definition in \eqref{eq:zero_padding}, $\{0\} \oplus \COP(\mathbb{F})$ is a face of $\COP(\mathbb{K})$.
\end{corollary}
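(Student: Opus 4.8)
The plan is to reduce the Corollary to Lemma~\ref{lem:COP_symcone_face} by passing through the Jordan-algebraic description of $\mathbb{K}$ and the classification of its faces. First I would invoke Theorem~\RNum{3}.3.1 of \cite{FK1994}, recalled in Section~\ref{subsec:symcone_EJA}, to endow $\mathbb{E}$ with a Jordan product $\circ$ such that $(\mathbb{E},\circ,\bullet)$ is a Euclidean Jordan algebra and $\mathbb{K} = \mathbb{E}_+$ is its cone of squares. This places the problem in exactly the setting of Lemma~\ref{lem:COP_symcone_face}.

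The crucial structural input is the classification of the faces of a symmetric cone: every face of $\mathbb{E}_+$ is of the form $\mathbb{E}(c,1)_+$ for some idempotent $c\in\mathbb{E}$, where $\mathbb{E}(c,1)_+$ denotes the cone of squares of the Jordan subalgebra $\mathbb{E}(c,1)$. Applying this to the given face $\mathbb{F}$, I would fix such an idempotent $c$ with $\mathbb{F} = \mathbb{E}(c,1)_+$. Since $\mathbb{E}(c,1)_+$ is the symmetric cone of the subalgebra $\mathbb{E}(c,1)$, its self-duality forces it to be full-dimensional in $\mathbb{E}(c,1)$, so $\setspan\mathbb{F} = \mathbb{E}(c,1)$. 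Consequently $\mathbb{V}_2 = \setspan\mathbb{F} = \mathbb{E}(c,1)$, and by orthogonality of the Peirce decomposition \eqref{eq:Peirce_concise} we get $\mathbb{V}_1 = (\setspan\mathbb{F})^\perp = \mathbb{E}(c,0)\oplus\mathbb{E}(c,\tfrac{1}{2})$.

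The remaining step is to verify that the zero-padding in the Corollary coincides with the one in Lemma~\ref{lem:COP_symcone_face}. Both are instances of the construction \eqref{eq:zero_padding}, and both single out $\mathbb{E}(c,1)$ as the distinguished last summand: the Corollary uses the two-block decomposition $\mathbb{E} = \mathbb{V}_1\oplus\mathbb{V}_2$ with $\mathbb{V}_2 = \mathbb{E}(c,1)$, while the Lemma uses the three-block Peirce decomposition with $\mathbb{E}(c,1)$ last. Because the orthogonal projection $\mathcal{P}_{\mathbb{E}(c,1)}$ depends only on the subspace $\mathbb{E}(c,1)$ and its orthogonal complement in $\mathbb{E}$, both conventions produce the same set $\{\mathcal{P}_{\mathbb{E}(c,1)}^*\circ\mathcal{G}\circ\mathcal{P}_{\mathbb{E}(c,1)} \mid \mathcal{G}\in\COP(\mathbb{E}(c,1)_+)\}$. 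Moreover, since $\mathbb{F} = \mathbb{E}(c,1)_+$ as cones living in the same ambient space $\setspan\mathbb{F} = \mathbb{E}(c,1)$, we have $\COP(\mathbb{F}) = \COP(\mathbb{E}(c,1)_+)$, whence $\{0\}\oplus\COP(\mathbb{F}) = \{0\}\oplus\COP(\mathbb{E}(c,1)_+)$. Lemma~\ref{lem:COP_symcone_face} then states precisely that this set is a face of $\COP(\mathbb{E}_+) = \COP(\mathbb{K})$, completing the proof; note that the argument runs uniformly over all idempotents, covering the trivial faces $\mathbb{F} = \{0\}$ and $\mathbb{F} = \mathbb{K}$ as well.

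I expect the only genuine obstacle to be the facial classification step: justifying that an arbitrary face $\mathbb{F}$ really equals $\mathbb{E}(c,1)_+$ for some idempotent $c$. This is a standard fact about symmetric cones and underlies the use of \cite[Proposition~32]{Lourenco2021} already appearing in the proof of Lemma~\ref{lem:COP_symcone_face}, so it can be cited; everything downstream is bookkeeping that matches the two zero-padding conventions and identifies the relevant subalgebra.
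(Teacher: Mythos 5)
Your proposal is correct and follows essentially the same route as the paper: introduce the Jordan product so that $\mathbb{K}=\mathbb{E}_+$, invoke the classification of faces of a symmetric cone as $\mathbb{F}=\mathbb{E}(c,1)_+$ with $\setspan\mathbb{F}=\mathbb{E}(c,1)$ (the paper cites \cite[Theorem~3.1]{GS2006} for this, which you should do explicitly rather than gesturing at \cite[Proposition~32]{Lourenco2021}), and then apply Lemma~\ref{lem:COP_symcone_face}. Your additional bookkeeping reconciling the two-block and three-block zero-padding conventions is a detail the paper leaves implicit, but it is accurate.
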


\begin{proof}
Let  $\circ$ be a bilinear product on $\mathbb{E}$ such that $(\mathbb{E},\circ,\bullet)$ is a Euclidean Jordan algebra and $\mathbb{K} = \mathbb{E}_+$.
For a face $\mathbb{F}$ of $\mathbb{K}$, there exists an idempotent $c$ in the Euclidean Jordan algebra $\mathbb{E}$ such that $\mathbb{F} = \mathbb{E}(c,1)_+$ and $\setspan\mathbb{F} = \mathbb{E}(c,1)$~\cite[Theorem~\mbox{3.1}]{GS2006}.
Therefore, the claim follows from Lemma~\ref{lem:COP_symcone_face}.
\end{proof}

Corollary~\ref{cor:COP_symcone_face_geometric} implies that the facial structure of the copositive cone over a symmetric cone is never simpler than that of the underlying symmetric cone.
For a symmetric cone $\mathbb{K}$, consider faces $\mathbb{F}_1,\mathbb{F}_2$ of $\mathbb{K}$ such that $\mathbb{F}_2 \subseteq \mathbb{F}_1$.
It follows from Corollary~\ref{cor:COP_symcone_face_geometric} that $\{0\} \oplus \COP(\mathbb{F}_1)$, which is isomorphic to $\COP(\mathbb{F}_1)$, is a face of $\COP(\mathbb{K})$.
In addition, since $\mathbb{F}_1$ is also a symmetric cone on its span and $\mathbb{F}_2$ is a face of $\mathbb{F}_1$, $\{0\} \oplus \COP(\mathbb{F}_2)$ is a face of $\COP(\mathbb{F}_1)$.

In general, the facial structure of the copositive cone over a symmetric cone is much more complicated than the underlying symmetric cone.
For example, while the nonnegative orthant is polyhedral, the standard copositive cone is not even facially exposed.
Similarly, although symmetric cones satisfy good properties including facial exposedness\footnote{In fact, symmetric cones satisfy a stronger form of facial exposedness called \emph{orthogonal projectional exposedness}, see \cite[Proposition~\mbox{33}]{Lourenco2021} and also Propositions~9 and \mbox{13} therein.}, copositive cones over symmetric cones are not facially exposed, as shown in the following theorem.

\begin{theorem}\label{thm:main_geometric}
Let $\mathbb{K}$ be a symmetric cone of dimension greater than or equal to $2$ in a finite-dimensional real inner product space.
Then for any $c$ generating an extreme ray of $\mathbb{K}$, $\mathbb{R}_+c\otimes c$ is a non-exposed face of $\COP(\mathbb{K})$.
In particular, $\COP(\mathbb{K})$ is not facially exposed.
\end{theorem}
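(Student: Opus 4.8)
The plan is to prove the two parts of the statement in turn. For the claim that $\mathbb{R}_+ c\otimes c$ is a face, I would apply Corollary~\ref{cor:COP_symcone_face_geometric} to the face $\mathbb{F} = \mathbb{R}_+ c$ of $\mathbb{K}$ (an extreme ray is a face). Here $\setspan\mathbb{F} = \mathbb{R}c$, so $\COP(\mathbb{F})$ is the copositive cone over a ray inside the one-dimensional space $\mathbb{R}c$; since a self-adjoint transformation of $\mathbb{R}c$ is just scalar multiplication, one computes directly that $\COP(\mathbb{R}_+ c) = \mathbb{R}_+\,\mathrm{Id}_{\mathbb{R}c}$. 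Passing through the zero-padding of \eqref{eq:zero_padding}, and using that the orthogonal projection onto $\mathbb{R}c$ equals $\|c\|^{-2}\,c\otimes c$, this yields $\{0\}\oplus\COP(\mathbb{R}_+ c) = \mathbb{R}_+\,c\otimes c$. Hence $\mathbb{R}_+ c\otimes c$ is a face of $\COP(\mathbb{K})$; note also that $c\otimes c\in\COP(\mathbb{K})$, since $x\bullet(c\otimes c)(x) = (c\bullet x)^2\ge 0$.

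For non-exposedness I would argue by contradiction. Suppose $h\in\COP(\mathbb{K})^* = \CP(\mathbb{K})$ exposes this face, so that $\COP(\mathbb{K})\cap\{h\}^\perp = \mathbb{R}_+ c\otimes c$. By \eqref{eq:CP} and Carath\'{e}odory I may write $h = \sum_k a_k\otimes a_k$ with $a_k\in\mathbb{K}$. Since $c\otimes c$ lies in the exposed face, $0 = \langle h, c\otimes c\rangle = \sum_k (a_k\bullet c)^2$, and as every summand is nonnegative this forces $a_k\bullet c = 0$ for all $k$. This orthogonality is the only structural feature of $h$ that I will use.

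Next I would produce a single transformation $\mathcal{A}$ lying in $\COP(\mathbb{K})\cap\{h\}^\perp$ yet outside $\mathbb{R}_+ c\otimes c$, simultaneously for every such $h$. Because $\mathbb{K}$ is full-dimensional of dimension at least $2$, the line $\mathbb{R}c$ cannot contain $\mathbb{K}$, so I may fix $v\in\mathbb{K}$ with $v\notin\mathbb{R}c$ (for instance a second primitive idempotent from a Jordan frame extension of $c$). Let $\mathcal{A} = c\otimes v + v\otimes c$. For $x\in\mathbb{K}$ one has $x\bullet\mathcal{A}(x) = 2(c\bullet x)(v\bullet x)\ge 0$, since $c,v\in\mathbb{K} = \mathbb{K}^*$; thus $\mathcal{A}\in\COP(\mathbb{K})$. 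Moreover $\mathcal{A}(c) = (v\bullet c)c + \|c\|^2 v\notin\mathbb{R}c$, so $\mathcal{A}$ is not a scalar multiple of $c\otimes c$, and in particular $\mathcal{A}\notin\mathbb{R}_+ c\otimes c$. Finally $\langle h,\mathcal{A}\rangle = \sum_k 2(c\bullet a_k)(v\bullet a_k) = 0$, by the orthogonality $a_k\bullet c = 0$. Hence $\mathcal{A}\in\COP(\mathbb{K})\cap\{h\}^\perp = \mathbb{R}_+ c\otimes c$, contradicting $\mathcal{A}\notin\mathbb{R}_+ c\otimes c$. Therefore no $h$ exposes the face, the face is non-exposed, and since $\COP(\mathbb{K})$ admits a non-exposed face it is not facially exposed.

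The step I expect to require genuine care is the choice of the witness $\mathcal{A}$. A literal transcription of Dickinson's orthant argument would couple $c$ with a direction of the Peirce space $\mathbb{E}(c,\tfrac12)$, but this does not work here: for a primitive idempotent $c$ one has $\mathbb{E}(c,\tfrac12)\cap\mathbb{K} = \{0\}$, so such a coupling yields a quadratic form that changes sign on $\mathbb{K}$ and fails copositivity — this is precisely the point at which the symmetric-cone case departs from the orthant case. The resolution is to couple $c$ with an element $v$ of the cone $\mathbb{K}$ itself, which secures copositivity through self-duality while keeping the pairing with every candidate $h$ equal to zero. The remaining substantive content, namely that $\mathbb{R}_+ c\otimes c$ is a face at all, is supplied entirely by Lemma~\ref{lem:COP_symcone_face} and Corollary~\ref{cor:COP_symcone_face_geometric}.
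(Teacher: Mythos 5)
Your proposal is correct and follows the same overall strategy as the paper: reduce the face claim to Corollary~\ref{cor:COP_symcone_face_geometric} applied to $\mathbb{F}=\mathbb{R}_+c$, then suppose an exposing $\mathcal{H}=\sum_i h_i\otimes h_i\in\CP(\mathbb{K})$ exists, deduce $c\bullet h_i=0$ from $\langle c\otimes c,h_i\otimes h_i\rangle=(c\bullet h_i)^2$, and exhibit a witness of the form $c\otimes v+v\otimes c$ lying in $\COP(\mathbb{K})\cap\{\mathcal{H}\}^\perp$ but outside $\mathbb{R}_+c\otimes c$. The one genuine (if small) divergence is in the choice of $v$ and the final contradiction: the paper invokes the facial exposedness of the symmetric cone to obtain a nonzero $d\in\mathbb{K}\cap\{c\}^\perp$ exposing $\mathbb{R}_+c$, and then derives the contradiction from the norm identity $\|c\otimes d+d\otimes c\|^2=2\|c\|^2\|d\|^2>0$ versus $\langle c\otimes d+d\otimes c,\alpha\, c\otimes c\rangle=2\alpha\|c\|^2(c\bullet d)=0$; you instead take an arbitrary $v\in\mathbb{K}\setminus\mathbb{R}c$ (which exists since $\dim\mathbb{K}\ge 2$) and rule out $\mathcal{A}\in\mathbb{R}_+c\otimes c$ directly from $\mathcal{A}(c)=(v\bullet c)c+\|c\|^2v\notin\mathbb{R}c$. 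Your variant is marginally more elementary in that it does not use the exposedness of $\mathbb{R}_+c$ inside $\mathbb{K}$ nor the orthogonality $c\bullet d=0$ (which is indeed not needed: the pairing $\langle\mathcal{H},\mathcal{A}\rangle=2\sum_i(c\bullet h_i)(v\bullet h_i)$ vanishes already because each $c\bullet h_i=0$); the paper's choice buys a clean closed-form norm computation. Both arguments are sound, and your side remark correctly identifies why a naive transplant of Dickinson's orthant witness through $\mathbb{E}(c,\tfrac12)$ would break copositivity.
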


\begin{proof}
It follows from Corollary~\ref{cor:COP_symcone_face_geometric} that $\{0\} \oplus \COP(\mathbb{R}_+c)$ is a face of $\COP(\mathbb{K})$.
In addition, as a linear transformation, since $c\otimes c$ is a basis of $\mathcal{S}(\mathbb{R}c)$, $\{0\} \oplus \COP(\mathbb{R}_+c)$ is equal to $\mathbb{R}_+ c\otimes c$.
In what follows, we show that the face $\mathbb{R}_+ c\otimes c$ of $\COP(\mathbb{K})$ is not exposed.

We assume that the face $\mathbb{R}_+c\otimes c$ is exposed.
Then there exists $\mathcal{H} \in \CP(\mathbb{K})$ such that
\begin{equation}
\mathbb{R}_+c\otimes c = \COP(\mathbb{K}) \cap \{\mathcal{H}\}^\perp. \label{eq:nonexposed_ray_contradiction_general}
\end{equation}
As $\mathcal{H} \in \CP(\mathbb{K})$, there exist $h_1,\dots,h_m \in \mathbb{K}$ such that $\mathcal{H}$ decomposes into $\sum_{i=1}^m h_i \otimes h_i$, see \eqref{eq:CP}.
We see from \eqref{eq:nonexposed_ray_contradiction_general} that $c\otimes c$ is orthogonal to $\mathcal{H}$, i.e.,
\begin{equation}\label{eq:tensor_trace}
0 = \langle c\otimes c,\mathcal{H}\rangle = \sum_{i=1}^m\langle c\otimes c,h_i \otimes h_i\rangle.
\end{equation}
Fix an arbitrary orthonormal basis $\{v_1,\ldots, v_n\}$ of $\mathbb{V}$.
For each $i = 1,\dots,m$, we have
$c \bullet h_i = \sum _{j=1}^n (c\bullet v_j)(h_i\bullet v_j)$.
By definition, $\langle c\otimes c,h_i \otimes h_i\rangle$ is the trace of the functional composition of $c \otimes c$ with $h_i \otimes h_i$.
Therefore, with our choice of basis, we have
\begin{align}
\langle c\otimes c,h_i \otimes h_i\rangle &=  \sum _{j=1}^n v_j \bullet \{(c\otimes c)((h_i \otimes h_i)(v_j) )\} \nonumber\\
&=  \sum _{j=1}^n (v_{j}\bullet c)  (c \bullet h_i) (h_i \bullet v_j)\nonumber\\
&= (c\bullet h_i)^2.\label{eq:cprod_dot_hprod}
\end{align}
Therefore, \eqref{eq:tensor_trace} and \eqref{eq:cprod_dot_hprod} imply that $c\bullet h_i = 0$ for all $i = 1,\dots,m$.

Since the dimension of $\mathbb{K}$ is greater than or equal to $2$ and that of $\mathbb{R}_+c$ is $1$, $\mathbb{R}_+c$ is a face strictly contained in $\mathbb{K}$.
Recalling that symmetric cones are facially exposed, there exists a supporting hyperplane that exposes the face $\mathbb{R}_+c$, i.e., there exists $d \in \mathbb{K}^*$ such that $\mathbb{R}_+c = \mathbb{K} \cap \{d\}^{\perp}$.
Since $\mathbb{R}_+c$ is strictly contained in $\mathbb{K}$ and $\mathbb{K}$ is self-dual,
we have $d \neq 0$ and $d \in \mathbb{K} \cap \{c\}^\perp$.

Let $\mathcal{A} \coloneqq c\otimes d + d\otimes c \in \mathcal{S}(\setspan\mathbb{K})$.
Note that
\begin{equation}
\|\mathcal{A}\|^2 = 2\|c\|^2\|d\|^2 > 0. \label{eq:A_norm}
\end{equation}
For any $x\in \mathbb{K}$, we have
\begin{equation*}
x\bullet \mathcal{A}(x) = 2(c\bullet x)(d\bullet x) \ge 0,
\end{equation*}
where we use $c,d\in\mathbb{K}$ and the self-duality of $\mathbb{K}$ to derive the inequality.
Therefore, we obtain $\mathcal{A} \in \COP(\mathbb{K})$.
In addition, since
\begin{equation*}
\langle \mathcal{A},\mathcal{H}\rangle = 2\sum_{i=1}^m (c\bullet h_i)(d\bullet h_i) = 0,
\end{equation*}
we see that $\mathcal{A} \in \COP(\mathbb{K}) \cap \{\mathcal{H}\}^\perp$.
Combining it with \eqref{eq:nonexposed_ray_contradiction_general} implies that there exists $\alpha \ge 0$ such that $\mathcal{A} = \alpha c\otimes c$.
Therefore, we have
\begin{equation*}
\|\mathcal{A}\|^2 = \langle c\otimes d + d\otimes c,\alpha c\otimes c\rangle = 2\alpha\|c\|^2(c\bullet d) = 0,
\end{equation*}
which contradicts \eqref{eq:A_norm}.
Thus, $\mathbb{R}_+c\otimes c$ is a non-exposed face of $\COP(\mathbb{K})$.
\end{proof}

Theorem~\ref{thm:main_geometric} proves the main claim of this paper: that, in general,  $\COP(\mathbb{K})$ is not facially exposed.
However, the situation for $\CP(\mathbb{K})$, the dual of $\COP(\mathbb{K})$, is significantly less clear.
Zhang~\cite[Theorem~\mbox{3.4}]{Zhang2018} showed that the cone of completely positive matrices over $\mathbb{R}^n_+$ (i.e., $\CP(\mathbb{R}^n_+)$) is not facially exposed for $n \geq 5$.
Surprisingly, if $\mathbb{K}$ is a single second-order cone, then $\CP(\mathbb{K})$ is facially exposed.
This is because it can be expressed as the intersection of a positive semidefinite cone and a half-space~\cite[page~\mbox{251}]{SZ2003} and the intersection of facially exposed cones is facially exposed.
However, if $\mathbb{K}$ is a cone of positive semidefinite matrices, whether $\CP(\mathbb{K})$  is facially exposed or not seems to be unknown and it would be an interesting question to explore.

\vspace{0.5cm}
\noindent
{\bf Acknowledgments}
We thank the reviewers for their comments, which helped to improve the paper.
The first author is supported by JSPS Grant-in-Aid for JSPS Fellows JP22KJ1327.
The second author is supported by JSPS Grant-in-Aid for Early-Career Scientists JP23K16844 and JSPS Grant-in-Aid for Scientific Research (B) JP21H03398.

\begin{appendices}
\section{On the eleven cases in the proof of Lemma~\ref{lem:COP_symcone_face}}\label{apdx:eleven_cases}
In this appendix, we explain the reason why a quadruple $(i,j,k,l)$ satisfying \eqref{eq:three_conds} must fall into exactly one of the eleven cases listed in the proof of Lemma~\ref{lem:COP_symcone_face}.
The case separation we describe next is based on checking how many among the indices $(i,j,k,l)$ are equal.
First we recall that  $(i,j,k,l)$ satisfies \eqref{eq:three_conds} if and only if one of the following conditions
is satisfied
\begin{equation}\label{case:A}
i \leq j,\ k \leq l,\ i = k,\ j \leq  l,
\end{equation}
or
\begin{equation}\label{case:B}
i \leq  j,\ k \leq l,\ i < k.
\end{equation}

The first case is when all the indices $i,j,k,l$ are equal, which corresponds to Case~\ref{enum:case1}.
We also note  that $(i,j,k,l)$ satisfying \eqref{eq:three_conds} falls into Case~\ref{enum:case1} if and only if $i = l$.
In order to see that,  we assume that $i = l$.
From \eqref{case:A} and \eqref{case:B}, we have $i \leq k$ which implies $l \le k$, by assumption.
Together with $k \le l$, we obtain  $i = k = l$.
In addition, by $ij \preceq kl = ii$, we obtain  $j \leq i$.
Combining this with $i \le j$ implies that $i = j$.
Therefore, $i,j,k,l$ are identical.
In other words, unless $i,j,k,l$ are identical, $i$ is not equal to $l$.

Next, we examine what happens when the quadruple $(i,j,k,l)$ is such that exactly three of $i,j,k,l$ are identical.
They must fall into exactly one of the following four cases:
\begin{enumerate}
\renewcommand{\theenumi}{\Rnum{1}-\arabic{enumi}}
\renewcommand{\labelenumi}{(\theenumi)}
\item $j = k = l \neq i$ (Case~\ref{enum:case5});
\item $i = k = l \neq j$; \label{enum:three_equal_1}
\item $i = j = l \neq k$; \label{enum:three_equal_2}
\item $i = j = k \neq l$ (Case~\ref{enum:case2}).
\end{enumerate}
If the quadruple $(i,j,k,l)$ satisfies \eqref{enum:three_equal_1} or \eqref{enum:three_equal_2}, as shown in the previous paragraph, $i,j,k,l$ must be identical, which contradicts the assumption that exactly three indices are equal.
Therefore, \eqref{enum:three_equal_1} and \eqref{enum:three_equal_2} do not occur.

Our next task is to consider the case where two among the $i,j,k,l$ are identical, and the remaining indices are also identical, but the two pairs are different from each other.
The quadruple $(i,j,k,l)$ satisfying this condition must fall into exactly one of the following three cases:
\begin{enumerate}
\renewcommand{\theenumi}{\Rnum{2}-\arabic{enumi}}
\renewcommand{\labelenumi}{(\theenumi)}
\item $i = j \neq k = l$ (Case~\ref{enum:case3});
\item $i = k \neq j = l$ (Case~\ref{enum:case4});
\item $i = l \neq j = k$. \label{enum:two_two_equal_1}
\end{enumerate}
If the quadruple $(i,j,k,l)$ satisfies \eqref{enum:two_two_equal_1}, $i,j,k,l$ must be identical, which contradicts the assumption.

Next, the quadruple $(i,j,k,l)$ such that exactly two among the $i,j,k,l$ are identical must fall into exactly one of the following six cases: Cases~\ref{enum:case6} through \ref{enum:case10}, and the case where $i = l$ and $i,j,k$ are different from each other.
However, the last case of $i = l$ cannot occur since this implies that all the $i,j,k,l$ are equal.

Finally, the quadruple $(i,j,k,l)$ such that all the indices are different from each other corresponds to Case~\ref{enum:case11}.
\end{appendices}

\bibliographystyle{plainurl}
\bibliography{NL23_2_ref}
\end{document}